\declaretheorem{theorem}
\declaretheorem[sibling=theorem]{corollary, lemma, proposition, question, definition, conjecture,remark,example}
\def\F{\mathbb{F}}
\title{Sphere intersections and incidences over finite fields}
\author{Doowon Koh \thanks{Department of Mathematics, Chungbuk National University, Cheongju, Chungbuk 28644 Korea. Email: {\tt koh131@chungbuk.ac.kr
}} \and Ben Lund\thanks{Institute of Mathematics and Interdisciplinary Studies, Xidian University, Xi’an, 710071, China. Email: {\tt lund.ben@gmail.com}} \and Chuandong Xu \thanks{School of Mathematics and Statistics, Xidian University, Xi’an, 710071, China. Email: {\tt xuchuandong@xidian.edu.cn}} \and Semin Yoo\thanks{Discrete Mathematics Group, Institute for Basic Science (IBS). Email: {\tt syoo19@ibs.re.kr}}}
\begin{document}

\maketitle

\begin{abstract}
We bound the number of incidences between points and spheres in finite vector spaces by bounding the sum of the number of points in the pairwise intersections of the spheres. 
We obtain new incidence bounds that are interesting when the number of spheres is not too large.
Our approach also leads to an elementary proof of the Iosevich-Rudnev bound on the Erd\H{o}s-Falconer distance problem in odd dimensions.

\end{abstract}

\section{Introduction}
Throughout this paper, let $q$ be an odd prime power, $\F_q$ the finite field with $q$ elements, and $\F_q^{\times}=\F_q \setminus \{0\}$.
For $x \in \F_q^d$, let $\|x\| = x_1^2 + x_2^2 + \cdots + x_d^2$.
The set $s = \{x \in \F_q^d: \|x-y\| = r\}$ is the $\|\cdotp\mkern-3mu\|$-sphere with center $y \in \F_q^d$ and radius $r \in \F_q$, and we omit $\|\cdotp\mkern-3mu\|$ when it is obvious from context.
A point $x$ is incident to a sphere $s$ if $x \in s$, and for $P \subseteq \F_q^d$ and $S$ a set of spheres in $\F_q^d$, we denote by
\[I(P,S) = \#\{(x,s) \in P \times S: x \in s\}\]
the number of incidences between $P$ and $S$.

Bounds on the number of point sphere incidences over finite fields have been the subject of several papers, and such bounds have a number of important applications to problems in finite geometry and additive combinatorics over finite fields; for example, see \cite{cilleruelo2014elementary,phuong2017incidences,koh2022finite,koh2022point,chakraborti2024almost}.

The first and most important result on this subject is implicit in the work of Iosevich and Rudnev on the Erd\H{o}s-Falconer distance problem \cite{iosevich2007erdos}:

\begin{theorem}[\cite{iosevich2007erdos}]\label{thm:iosevichRudnev}
    For $d \geq 2$, let $S$ be a set of $\|\cdotp\mkern-3mu\|$-spheres, each with the same radius $r \neq 0$, and let $P \subseteq \mathbb{F}_q^d$.
    Then,
    \[|I(P,S) - q^{-1}|P|\,|S| |\leq 2 \sqrt{q^{d-1}|P|\,|S|}. \]
\end{theorem}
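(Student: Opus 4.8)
The plan is to prove the bound by Fourier analysis on $\F_q^d$, extracting the main term $q^{-1}|P|\,|S|$ from the trivial frequency and controlling the remainder by the square-root cancellation in the Fourier transform of a sphere. Fix a nontrivial additive character $\chi$ of $\F_q$ and let $Y\subseteq\F_q^d$ be the set of centres of the spheres in $S$, so that $|Y|=|S|$ and $I(P,S)=\sum_{x\in P}\sum_{y\in Y}\mathds{1}[\|x-y\|=r]$. Using the orthogonality relation $\mathds{1}[\|x-y\|=r]=q^{-1}\sum_{t\in\F_q}\chi\big(t(\|x-y\|-r)\big)$ and isolating the term $t=0$ produces the main term $q^{-1}|P|\,|S|$ exactly, so that the error is
\[E:=I(P,S)-q^{-1}|P|\,|S|=\frac1q\sum_{t\neq0}\chi(-tr)\sum_{x\in P}\sum_{y\in Y}\chi(t\|x-y\|).\]

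To decouple $x$ and $y$ I would complete the square. Expanding $\|x-y\|=\|x\|-2\,x\cdot y+\|y\|$ and summing a one-variable quadratic exponential in each coordinate shows that, for $t\neq0$, the function $w\mapsto\chi(t\|w\|)$ has Fourier transform $g(t)^d\chi(-\|\xi\|/(4t))$, where $g(t)=\sum_{u}\chi(tu^2)$ is the quadratic Gauss sum with $|g(t)|=\sqrt q$. Substituting $w=x-y$, applying Fourier inversion, and exchanging the order of summation turns $E$ into a single frequency sum
\[E=\sum_{\xi\in\F_q^d}\phi(\xi)\,\widetilde P(\xi)\,\overline{\widetilde Y(\xi)},\qquad \widetilde P(\xi)=\sum_{x\in P}\chi(x\cdot\xi),\]
where $\phi(\xi)=q^{-d-1}\sum_{t\neq0}\chi\!\big(-tr-\tfrac{\|\xi\|}{4t}\big)g(t)^d$ is, up to normalisation, the Fourier transform of the indicator of the origin-centred sphere of radius $r$ with its mean subtracted. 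Cauchy--Schwarz in $\xi$ together with Plancherel, $\sum_\xi|\widetilde P(\xi)|^2=q^d|P|$ and likewise for $Y$, reduces everything to a uniform pointwise estimate: if $|\phi(\xi)|\le 2q^{-(d+1)/2}$ for every $\xi$, then $|E|\le 2q^{-(d+1)/2}\cdot q^d\sqrt{|P|\,|S|}=2\sqrt{q^{d-1}|P|\,|S|}$, which is exactly the claim with the sharp constant $2$.

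The crux is therefore the bound $|\phi(\xi)|\le 2q^{-(d+1)/2}$ uniformly in $\xi$. Writing $g(t)=\eta(t)g(1)$ with $\eta$ the quadratic character, this reduces to
\[\Big|\sum_{t\neq0}\eta(t)^d\,\chi\!\big(-tr-\tfrac{\|\xi\|}{4t}\big)\Big|\le 2\sqrt q.\]
When $d$ is even the left-hand side is a Kloosterman sum with parameters $-r\neq0$ and $-\|\xi\|/4$, and the desired bound is precisely Weil's estimate; note that when $\|\xi\|=0$ the sum degenerates to $\sum_{t\neq0}\chi(-tr)=-1$, well within the bound, so the trivial frequency needs no separate treatment and the constant stays $2$. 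When $d$ is odd it is a Salié sum, which admits an elementary closed-form evaluation through Gauss sums and is again bounded by $2\sqrt q$; this is presumably the source of the elementary proof advertised in odd dimensions. I expect this uniform exponential-sum bound --- invoking Weil's bound in even dimensions and controlling the degenerate frequencies $\|\xi\|=0$ --- to be the one genuinely nontrivial input; the surrounding steps are formal character-sum manipulation and a single Cauchy--Schwarz.

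As an alternative that sidesteps Fourier analysis and aligns with the stated method of the paper, one can argue by second moments. Writing $f(x)=\#\{s\in S:x\in s\}$ gives $I(P,S)=\sum_{x\in P}f(x)$, and Cauchy--Schwarz bounds the deviation from the mean $\bar f$ by $\sqrt{|P|\sum_x(f(x)-\bar f)^2}$, where $\sum_x f(x)^2=\sum_{s,s'\in S}|s\cap s'|$. The task then reduces to estimating the total pairwise intersection $\sum_{s,s'}|s\cap s'|$, each term being a sphere-hyperplane intersection of size roughly $q^{d-2}$; here the main difficulty is evaluating that double sum sharply enough to recover the constant $2$, which is exactly the quantity the rest of the paper sets out to control.
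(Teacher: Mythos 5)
Your Fourier-analytic argument is correct and, modulo the two standard inputs you cite (Weil's bound for Kloosterman sums, and the Sali\'e-sum evaluation in odd dimensions), it is complete: the orthogonality decomposition isolating $t=0$, the completion of the square giving the transform $g(t)^d\chi(-\|\xi\|/(4t))$, the Cauchy--Schwarz/Plancherel step, and the reduction to the uniform bound $\big|\sum_{t\neq 0}\eta(t)^d\chi\big(-tr-\tfrac{\|\xi\|}{4t}\big)\big|\le 2\sqrt q$ are all carried out correctly, including the degenerate frequencies $\|\xi\|=0$. Be aware, however, that this is not the route taken in the paper: the paper does not reprove \cref{thm:iosevichRudnev} in full generality at all --- it imports it from Iosevich--Rudnev, and what you have reconstructed is essentially that cited proof (the paper explicitly notes that the only known proof in full generality relies on Weil's Kloosterman bound). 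The paper's own methodological contribution is different: an elementary, self-contained proof of the \emph{odd-dimensional case only} (\cref{th:newIosevichRudnev}), with the weaker constant $3$ in place of $2$, obtained by exactly the second-moment strategy you sketch in your final paragraph. There, a pseudo-random graph lemma (\cref{thm:tomason}, specialized as \cref{th:generalWeightedBound}) reduces the incidence bound to controlling $\sum_{s_1\neq s_2}\left(|s_1\cap s_2|-p^2q^d\right)$, and the problematic pairs of spheres (those whose centers lie at distance $0$ or $4r$, where the intersection is abnormally large) are handled by bootstrapping: a separate incidence bound for zero-radius spheres (\cref{th:incidenceBoundOddD0R}), a character-sum-over-distances lemma (\cref{th:quadraticResidueDists}), and a self-referential inequality for $I^\Delta_r(m,m)$ that is then solved. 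The trade-offs are clear: your route gives the sharp constant $2$ and covers all $d\ge 2$, but in even dimensions it genuinely requires Weil's deep bound; the paper's route avoids Kloosterman and Sali\'e sums entirely, but works only for odd $d$ and loses in the constant. One small correction: your guess that the Sali\'e-sum evaluation ``is presumably the source of the elementary proof advertised in odd dimensions'' is not right --- the paper's elementary odd-dimensional proof is the intersection/second-moment argument just described, not a Sali\'e-sum computation (though the Sali\'e route is indeed another elementary option in odd dimensions).
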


The assumption in \cref{thm:iosevichRudnev} that all of the radii are the same can be relaxed using the Cauchy-Schwarz inequality, as follows:

\begin{corollary}\label{th:iosevichRudnevMultipleRadii}
    Let $R \subseteq \F_q^\times$, let $S$ be a set of $\|\cdotp\mkern-3mu\|$-spheres with radii in $R$, and let $P \subseteq \F_q^d$.
    Then,
    \[|I(P,S) - q^{-1}|P|\,|S|| \leq 2 \sqrt{q^{d-1}|R|\,|P|\,|S|}. \]
\end{corollary}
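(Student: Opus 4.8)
The plan is to reduce to \cref{thm:iosevichRudnev} by partitioning the spheres according to their radii and then recombining the per-radius bounds via the Cauchy--Schwarz inequality. Concretely, for each $r \in R$ I would let $S_r \subseteq S$ denote the subset of spheres with radius exactly $r$, so that $S$ is the disjoint union of the $S_r$ and $I(P,S) = \sum_{r\in R} I(P,S_r)$.

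Since every sphere in $S_r$ has the same nonzero radius $r$, \cref{thm:iosevichRudnev} applies directly to each pair $(P, S_r)$, giving $|I(P,S_r) - q^{-1}|P|\,|S_r|| \le 2\sqrt{q^{d-1}|P|\,|S_r|}$. Summing over $r \in R$, using that $\sum_{r} |S_r| = |S|$, and applying the triangle inequality yields
\[ \bigl| I(P,S) - q^{-1}|P|\,|S| \bigr| \le \sum_{r\in R} \bigl| I(P,S_r) - q^{-1}|P|\,|S_r| \bigr| \le 2\sqrt{q^{d-1}|P|}\sum_{r\in R}\sqrt{|S_r|}. \]

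The final step is to control $\sum_{r\in R}\sqrt{|S_r|}$. Applying Cauchy--Schwarz to the vectors $(1)_{r\in R}$ and $(\sqrt{|S_r|})_{r\in R}$ gives $\sum_{r\in R}\sqrt{|S_r|} \le \sqrt{|R|}\,\bigl(\sum_{r\in R}|S_r|\bigr)^{1/2} = \sqrt{|R|\,|S|}$, and substituting this in produces exactly the claimed bound $2\sqrt{q^{d-1}|R|\,|P|\,|S|}$.

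There is no serious obstacle here: the argument is essentially a triangle inequality followed by a single application of Cauchy--Schwarz, and the only point requiring care is that the main term $q^{-1}|P|\,|S_r|$ sums cleanly to $q^{-1}|P|\,|S|$, so that it is precisely the error terms that get estimated. The one mild decision is where to spend the Cauchy--Schwarz; applying it to the square-root error sum (rather than, say, to the incidence counts directly) is what keeps the dependence on $|R|$ down to the factor $\sqrt{|R|}$ advertised in the statement.
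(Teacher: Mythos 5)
Your proof is correct and follows essentially the same route as the paper: partition $S$ by radius, apply \cref{thm:iosevichRudnev} to each class $S_r$, and recombine with a single Cauchy--Schwarz in $|R|$. The only cosmetic difference is ordering --- the paper applies Cauchy--Schwarz to the signed errors $\sum_{r}\bigl(I(P,S_r)-q^{-1}|P|\,|S_r|\bigr)$ before invoking the per-radius bound, whereas you invoke the per-radius bound first and then apply Cauchy--Schwarz to $\sum_{r}\sqrt{|S_r|}$; the two are equivalent.
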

\begin{proof}
    For $r \in R$, let $S_r = \{s \in S: r(s) = r\}$.
    \begin{align*}
        \left(I(P,S) - q^{-1}|P|\,|S| \right)^2 &= \left(\sum_{r \in R} \Big(I(P,S_r) - q^{-1}|P|\,|S_r| \Big)
        \right)^2 \\
        &\leq |R| \sum_{r \in R}\left( I(P,S_r) - q^{-1}|P|\,|S_r| \right)^2 \\
        &\leq |R| \sum_{r \in R} 4 q^{d-1}|P|\,|S_r| \\
        &= 4q^{d-1}|R|\,|P|\,|S|.
    \end{align*}
\end{proof}

The extension of \cref{th:iosevichRudnevMultipleRadii} to the case $R=\F_q$ was proved independently in \cite{cilleruelo2014elementary,phuong2017incidences}, with an improved constant term:
\begin{theorem}\label{th:simpleBound}
    Let $S$ be a set of $\|\cdotp\mkern-3mu\|$-spheres (with arbitrary radii), and let $P \subseteq \F_q^d$.
    Then,
    \[|I(P,S) - q^{-1}|P|\,|S|| \leq \sqrt{q^{d}|P|\,|S|}. \]
\end{theorem}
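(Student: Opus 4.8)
The plan is to reduce the theorem to a variance estimate and then to the sum of pairwise sphere intersections that is the central object of the paper. For $x \in \F_q^d$ write $f(x) = \#\{s \in S : x \in s\}$, so that $I(P,S) = \sum_{x \in P} f(x)$ and
\[
I(P,S) - q^{-1}|P|\,|S| = \sum_{x \in P}\bigl(f(x) - q^{-1}|S|\bigr).
\]
By the Cauchy--Schwarz inequality,
\[
\bigl(I(P,S) - q^{-1}|P|\,|S|\bigr)^2 \;\le\; |P|\sum_{x \in \F_q^d}\bigl(f(x) - q^{-1}|S|\bigr)^2,
\]
so it suffices to prove the variance bound $\sum_{x}\bigl(f(x) - q^{-1}|S|\bigr)^2 \le q^d|S|$. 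This is where the paper's theme enters: since $\sum_x f(x) = \sum_{s}|s|$ and, crucially, $\sum_x f(x)^2 = \sum_{s,s' \in S}|s \cap s'|$, expanding the square gives
\[
\sum_{x}\bigl(f(x) - q^{-1}|S|\bigr)^2 \;=\; \sum_{s,s' \in S}|s \cap s'| \;-\; 2q^{-1}|S|\sum_{s \in S}|s| \;+\; q^{d-2}|S|^2,
\]
and everything is now governed by the sizes of single spheres and of pairwise intersections.

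The substantive work is to evaluate these quantities with enough precision. Each sphere is a level set of the nondegenerate quadratic form $\|\cdot\|$, so a standard Gauss-sum computation gives $|s| = q^{d-1} + O(q^{(d-1)/2})$, with the error governed by the quadratic character $\eta$ of the relevant radius. For two distinct spheres I would split into cases: concentric spheres ($y_s = y_{s'}$, $r_s \ne r_{s'}$) are disjoint, while for distinct centers the intersection lies in the radical hyperplane $\{x : 2\langle x,\, y_{s'}-y_s\rangle = \|y_{s'}\|-\|y_s\| - r_{s'}+r_s\}$, on which $s$ restricts to a quadric in $d-1$ variables; counting its points, again via Gauss sums, yields $|s \cap s'| = q^{d-2} + O(q^{(d-1)/2})$.

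The main obstacle is that the dominant $|S|^2$-order terms must cancel \emph{exactly}, so a pairwise upper bound on $|s \cap s'|$ is not enough. The off-diagonal sum $\sum_{s \ne s'}|s \cap s'|$ is of order $|S|^2 q^{d-2}$, the same order as $-2q^{-1}|S|\sum_s|s| + q^{d-2}|S|^2 \approx -|S|^2 q^{d-2}$, so a crude estimate would leave a net $|S|^2$ term and fail for large $|S|$. The necessity of genuine cancellation is visible in the extreme case where $S$ is the set of all spheres: then every point lies on exactly $q^d$ spheres, $f$ is constant, and the variance is identically $0$. Capturing this cancellation is equivalent, via characters, to extracting square-root savings from the off-diagonal Gauss sums $\sum_{t \ne t'}\mathcal G(t-t')\sum_{s,s'}\chi\bigl(-\tfrac{tt'}{t-t'}\|y_s-y_{s'}\| - t r_s + t' r_{s'}\bigr)$, where $\mathcal G(a) = \sum_u \chi(a\|u\|)$ satisfies $|\mathcal G(a)| = q^{d/2}$.

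Once the leading terms are tracked exactly they cancel, leaving $\sum_x \bigl(f(x)-q^{-1}|S|\bigr)^2 = q^{d-1}|S| + (\text{lower order})$, comfortably below $q^d|S|$, and keeping the constants gives the clean factor $1$. For contrast, applying \cref{th:iosevichRudnevMultipleRadii} with $R = \F_q^\times$ immediately yields the same inequality with $2$ in place of $1$ and excludes radius $0$; the real content of \cref{th:simpleBound} is precisely the exact evaluation of the intersection sum that removes the extra factor and handles all radii.
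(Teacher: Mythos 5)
Your reduction is set up correctly: Cauchy--Schwarz over points and the identity $\sum_x f(x)^2=\sum_{s,s'\in S}|s\cap s'|$ are both fine, and the theorem would indeed follow from the variance bound $\sum_{x\in\F_q^d}(f(x)-q^{-1}|S|)^2\le q^d|S|$. The gap is the step where you assert that, after exact bookkeeping, the leading terms cancel, ``leaving $\sum_x(f(x)-q^{-1}|S|)^2=q^{d-1}|S|+(\text{lower order})$.'' That assertion is false, and it is the entire content of the theorem in your approach. The deviations $|s\cap s'|-q^{d-2}$ depend on the radii and on the distance between centers, and for adversarial $S$ they all have the same sign, so there is no cancellation at all on the sphere-pair side. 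Concretely, in \cref{const:zeroRadEvenDim} ($S$ the $q^{d/2}$ zero-radius spheres centered on a $d/2$-dimensional isotropic subspace), every pair of distinct spheres satisfies $|s\cap s'|=q^{d-2}+q^{d/2}-q^{d/2-1}$ (first line of \cref{tab:N_2EvenD}), so the off-diagonal sum is of order $|S|^2q^{d/2}$ and the variance is $(1-o(1))\,q^d|S|$: the terms you dismiss as lower order are the dominant ones, of exactly the magnitude of the bound being proved. Likewise, in \cref{const:oddDim} the variance is $\Theta(q^{(3d+3)/2})$ while $q^{d-1}|S|=\Theta(q^{(3d+1)/2})$, off by a factor of $q$. (A smaller issue: $|s|=q^{d-1}+O(q^{(d-1)/2})$ also fails for zero-radius spheres in even dimension, where the deviation is of order $q^{d/2}$.) So no argument of the form ``main terms cancel exactly plus square-root cancellation in the off-diagonal Gauss sums'' can work uniformly in $S$; extracting usable bounds from $\sum_{s\ne s'}|s\cap s'|$ genuinely requires restricting the radii or $|S|$, which is precisely what \cref{th:smallSBound,th:ourOddBound,th:newIosevichRudnev} do, and even then with conclusions weaker than \cref{th:simpleBound}.

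It is worth noting that the variance inequality you reduce to is actually true (with room to spare, $\sum_x(f(x)-q^{-1}|S|)^2\le(q^d-q^{d-1})|S|$), but its soft proof runs in the opposite direction from yours. Writing $M$ for the incidence matrix between all $q^d$ points and all $q^{d+1}$ spheres and $A=M-q^{-1}J$, one computes $AA^T=(q^d-q^{d-1})I$, because any two \emph{distinct points} lie on exactly $q^{d-1}$ spheres (the admissible centers form a hyperplane); hence every singular value of $A$ is $(q^d-q^{d-1})^{1/2}$ and $\|A\mathbf{1}_S\|_2^2\le(q^d-q^{d-1})|S|$. That exact count over pairs of points is where the cancellation lives, and it is exactly the paper's proof (\cref{th:simpleWeightedBound}, via \cref{thm:tomason} with $U$ the set of all spheres). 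Your plan seeks the same cancellation in pairs of \emph{spheres}, where it does not exist; as written, the proposal cannot be patched without abandoning that route.
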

It seems that \cref{th:simpleBound} is much easier to prove than \cref{thm:iosevichRudnev}.
The only known proof of \cref{thm:iosevichRudnev} in full generality depends on Weil's bounds on Kloosterman sums over finite fields \cite[Chapter 5.5]{LN97}.
By contrast, the proof of \cref{th:simpleBound} in \cite{cilleruelo2014elementary} is self-contained and elementary, depending on straightforward ideas from additive combinatorics.
Two of the results in this paper (\cref{th:newIosevichRudnev,th:simpleWeightedBound}) are a new, simple, proof of \cref{th:simpleBound} that gives some intuition for this seeming difference, and an elementary proof of the odd dimension case of \cref{thm:iosevichRudnev} that avoids the use of Kloosterman sums.

In general, \cref{th:simpleBound} is tight, as shown by the following constructions.

\begin{example}\label{const:simple}
For the simplest example, let $P=\{x\}$, and let $S$ be the set of all $q^d$ spheres that contain $x$.
Then, $I(P,S) = q^d = \sqrt{q^d|P|\|S|}$.
\end{example}

A more interesting example showing the optimality of \cref{th:simpleBound} occurs only in odd dimensions.
We describe this example in detail in \cref{sec:quadraticForms}, after introducing our notation for general quadratic forms.
The example we describe is essentially identical to \cite[Proposition 1.1]{kang2025erd}, appeared implicitly in \cite{iosevich2023quotient}, and generalizes an example first noticed in \cite{hart2011averages}.

\begin{example}\label{const:zeroRadEvenDim}
Another construction occurs when $d \equiv 0 \mod 4$ or when $d \equiv 2 \mod 4$ and $q \equiv 1 \mod 4$.
In this case, there are $d/2$-dimensional isotropic subspaces, hence there are sets of $|P|=q^{d/2}$ points such that $\|x-y\|=0$ for each pair $x,y \in P$.
If we take $S$ to be the set of all spheres whose centers are the points of $P$ and whose radii are zero, then $|S| = |P| = q^{d/2}$, and $I(P,S) = q^d = \sqrt{q^d|P|\,|S|}$.
\end{example}

Although \cref{th:simpleBound} is tight in general, we can hope to improve it in certain cases.
The most interesting case is for $d$ even, when $|P|$ is roughly $q^{(d+1)/2}$ and $|S|$ is roughly $q^{(d+3)/2}$, because this is the critical range to improve the bounds for the Erd\H{o}s-Falconer distance problem; see for example \cite{iosevich2007erdos,hart2011averages,chapman2012pinned,hanson2016distinct,lund2020bisectors,murphy2022pinned,bennett2013group,koh2015distance,covert2019generalized} for discussion of this problem.
In this paper, we prove new incidence bounds in the case that $|S|$ is smaller than this critical range.

Instead of considering only the specific quadratic form $\|x\| = x_1^2 + x_2^2 + \cdots + x_d^2$, we find it more convenient to consider the slightly more general setting of arbitrary non-degenerate quadratic forms.
Before stating our results, we introduce the relevant notation and review some basic facts about quadratic forms over finite fields of odd order.

\subsection{Quadratic forms}\label{sec:quadraticForms}
We refer to \cite[section 6.2]{LN97} for the basic theory of quadratic forms over finite fields of odd characteristic.
We use $\eta$ to denote the quadratic character over $\F_q$, so $\eta(x)=0$ if $x=0$, $\eta(x)=1$ if $x$ is a square, and $\eta(x)=-1$ if $x$ is not a square.

We can write a quadratic form $f$ as
\[f(x) = \sum_{i,j=1}^n a_{ij}x_ix_j, \text{ with }a_{ij}=a_{ji}. \]
Using this representation, we associate $f$ with a symmetric $n \times n$ matrix $A$ whose $(i,j)$ entry is $a_{ij}$.
We say that $f$ is non-degenerate if $A$ is non-singular.
Quadratic forms $f$ and $g$ associated with matrices $A$ and $B$, respectively, are {\em equivalent} if there is a non-singular matrix $C$ such that $B=C^TAC$.
It is a standard result that there are two inequivalent nondegenerate quadratic forms in each dimension, depending on whether the determinant of the matrix associated to the form is a quadratic residue or non-residue. 
In this paper, we use the following representative forms:
\begin{align*}
    Q_1^d(x) &= x_1x_2 + x_3x_4 + \cdots + x_{d-1}x_d, \\
    Q_2^d(x) &= x_1x_2 + x_3x_4 + \cdots + x_{d-3}x_{d-2}+ x_{d-1}^2 - \epsilon x_d^2, \\
    Q_3^d(x) &= x_1x_2 + x_3x_4 + \cdots + x_{d-2}x_{d-1} - x_d^2, \\
    Q_4^d(x) &= x_1x_2 + x_3x_4 + \cdots + x_{d-2}x_{d-1} - \epsilon x_d^2,
\end{align*}
where $\epsilon$ is a non-square, and $d$ must be even for $Q_1,Q_2$, and $d$ must be odd for $Q_3,Q_4$.
We omit $d$ and $i$ from the notation when it is obvious from context.
We refer to the translates of the level sets of a quadratic form $Q$ as \textit{$Q$-spheres}.
In more detail,
$s=\{x : Q_i(x-y)=r\}$
is the $Q_i$-sphere with center $c(s)=y$ and radius $r(s)=r$.

Note that 
\begin{align*}
    \eta(\det(Q_1^d)) &= \eta((-1)^{d/2}), &
    \eta(\det(Q_2^d)) &= -\eta((-1)^{d/2}), \\
    \eta(\det(Q_3^d)) &= \eta((-1)^{(d+1)/2}), &
    \eta(\det(Q_4^d)) &= -\eta((-1)^{(d+1)/2}),
\end{align*}
where $\det(Q)$ is the determinant of the matrix associated with $Q$.
Since $-1$ is a square in $\F_q$ if and only if $q \equiv 1 \mod 4$, and $\det(\|\cdotp\mkern-3mu\|)=1$, we have:
\begin{itemize}
    \item For even $d$, the form $\|\cdotp\mkern-3mu\|$ is equivalent to $Q_2^d$ if and only if $q \equiv 3 \mod 4$ and $d \equiv 2 \mod 4$; otherwise, it is equivalent to $Q_1^d$.
    \item For odd $d$, the form $\|\cdotp\mkern-3mu\|$ is equivalent to $Q_4^d$ if and only if $q \equiv 3 \mod 4$ and $d \equiv 1 \mod 4$; otherwise, it is equivalent to $Q_3^d$.
\end{itemize}

Here comes the example of \cite[Proposition 1.1]{kang2025erd}, restated in our notation.
The example shows that \cref{th:simpleBound} is within a constant factor of being tight in odd dimensions, even in the critical range for the Erd\H{o}s-Falconer distance problem.
\begin{example}\label{const:oddDim}
Let $d$ be odd, let $Q = Q_i^d$ for $i \in \{3,4\}$, and let
\[P = \{(0,x_1,0,x_2,\ldots,0,x_{(d-1)/2},y)  : x_1,\ldots,x_{(d-1)/2},\ y \in \F_q\}.\]
It is straightforward to check that, for $a,b \in P$, we have $Q(a-b)=Q((0,\ldots,0,z))$ for some $z \in \F_q$.
Hence, $\eta(Q(a-b)) = (-1)^{i+1}\eta(-1)$.
Let $S$ be the set of $Q$-spheres with centers in $P$ and radii in $\{r:\eta(r) = (-1)^{i+1}\eta(-1)\}$.
Then, $|P|=q^{(d+1)/2}$, and $|S| = \frac{q-1}{2}|P|$, and $I(P,S) = |P|(|P|-1) = q^{d+1}-q^{(d+1)/2} = (2^{1/2}+o(1))\sqrt{q^d|P|\,|S|}$.
\end{example}

\subsection{Results}

The main contribution of this paper is to bound point-sphere incidences by bounding sphere-sphere intersections.
This builds on a standard idea in the study of pseudo-random graphs \cite{thomason1987pseudo, thomason1989dense}, which is that good expansion properties of a graph can be derived from a good bound on the intersection of the neighborhoods of vertices.

We use this approach to obtain a new proof of the odd dimension case of \cref{thm:iosevichRudnev}, and a new proof of \cref{th:simpleBound}.
The new proof of \cref{th:simpleBound} is very simple, and suggests a reason for the observation that \cref{th:simpleBound} seems to be much easier to prove than \cref{thm:iosevichRudnev}; see the discussion after the proof of \cref{th:simpleWeightedBound} in \cref{sec:combinatorial}.

Our new proof of \cref{thm:iosevichRudnev} for odd $d$ is elementary and self-contained - see \cref{th:newIosevichRudnev}.
The problem of obtaining an elementary proof of \cref{thm:iosevichRudnev} has been previously discussed in the literature, for example \cite{koh2022point}.

We also obtain new point-sphere incidence bounds that are interesting in the case that there are not too many spheres.
In even dimensions, we prove:
\begin{restatable}{theorem}{smallSBound}\label{th:smallSBound}
    Let $d$ be even, let $i \in \{1,2\}$, and $Q = Q_i^d$.
    Let $P$ be a set of points in $\F_q^d$.
    If $S$ a set of non-zero-radius $Q$-spheres, then
    \begin{equation}\label{eq:i2IncidenceBound}
    \left| I(P,S) - q^{-1}\,|P|\,|S| \right| \leq (1+o(1))\sqrt{|P|\Big(q^{d-1} |S| + q^{d/2-1}|S|^2\Big)}. \end{equation}
    If $i=2$ and $S$ is a set of zero-radius $Q$-spheres, then
    \begin{equation}\label{eq:i2ZeroIncidenceBound}
        \left| I(P,S) - q^{-1}\,|P|\,|S| \right| \leq (1+o(1))\sqrt{|P|\Big(q^{d-1} |S| + q^{d/2-1}|S|^2\Big)}.
    \end{equation}
    If $i=1$ and $S$ is a set of zero-radius $Q$-spheres, then
    \begin{equation} \label{eq:i1IncidenceBound}
    \left| I(P,S) - q^{-1}\,|P|\,|S| \right| \leq (1+o(1))\sqrt{|P|\Big(q^{d-1} |S| + q^{d/2}|S|^2\Big)}. \end{equation}
\end{restatable}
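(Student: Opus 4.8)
The plan is to follow the pseudorandom-graph philosophy announced in the introduction: bound the incidence discrepancy by controlling the sum of pairwise sphere intersections, and then apply a Cauchy--Schwarz / second-moment argument. Concretely, I would set up the incidence count as a bilinear form. For each point $x \in \F_q^d$, let $n(x) = \#\{s \in S : x \in s\}$ be the number of spheres through $x$. Then $I(P,S) = \sum_{x \in P} n(x)$, and by Cauchy--Schwarz,
\begin{equation*}
\left( I(P,S) - q^{-1}|P|\,|S| \right)^2 \leq |P| \sum_{x \in \F_q^d} \left( n(x) - q^{-1}|S| \right)^2 .
\end{equation*}
Expanding the square, $\sum_x (n(x) - q^{-1}|S|)^2 = \sum_x n(x)^2 - 2q^{-1}|S|\sum_x n(x) + q^{d-2}|S|^2$. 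Now $\sum_x n(x) = \sum_{s \in S} |s|$ counts total incidences over the whole space, and $\sum_x n(x)^2 = \sum_{s,s' \in S} |s \cap s'|$ is exactly the sum of pairwise sphere intersection sizes. So the entire estimate reduces to understanding $\sum_{s,s'} |s \cap s'|$, i.e. how two $Q$-spheres meet.

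The heart of the proof is therefore a clean count of $|s \cap s'|$ for two $Q$-spheres. First I would record the diagonal terms: each sphere $s$ has $|s| = q^{d-1} + O(q^{d/2})$ points (the exact main term and the error come from the standard Gauss-sum evaluation of $\#\{x : Q(x) = r\}$ for nondegenerate $Q$ over $\F_q$, which depends on $r=0$ versus $r \neq 0$ and on $\eta(\det Q)$; this is where the case split among \eqref{eq:i2IncidenceBound}, \eqref{eq:i2ZeroIncidenceBound}, \eqref{eq:i1IncidenceBound} originates). For the off-diagonal terms, fixing two distinct centers $c(s), c(s')$, the intersection $s \cap s'$ is the common solution set of $Q(x - c) = r$ and $Q(x - c') = r'$; subtracting the two equations gives a \emph{linear} equation (because the quadratic parts cancel), so $s \cap s'$ lies in an affine hyperplane, and intersecting $s$ with that hyperplane is counting points of a quadric in dimension $d-1$. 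The count depends on whether the restricted form is degenerate, which in turn depends on whether the displacement vector $c - c'$ is isotropic for $Q$ and on the radii; generically $|s \cap s'| = q^{d-2} + O(q^{(d-1)/2})$, but degenerate configurations inflate this.

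The main work, and the main obstacle, is organizing the sum $\sum_{s \neq s'} |s \cap s'|$ by aggregating over the possible ``types'' of pairs rather than bounding each term by its worst case. Writing $|s \cap s'| = q^{d-2} + (\text{fluctuation})$, the leading $q^{d-2}$ summed over the $\leq |S|^2$ pairs produces the $q^{d-2}|S|^2$ main term, which combines with the $-2q^{-1}|S|\sum_x n(x)$ and $q^{d-2}|S|^2$ terms so that the dominant pieces cancel, leaving precisely the two-term bound $q^{d-1}|S| + q^{d/2-1}|S|^2$ (and $q^{d/2}|S|^2$ in the $i=1$ zero-radius case \eqref{eq:i1IncidenceBound}) inside the square root. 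The $q^{d-1}|S|$ contribution comes from the diagonal $\sum_s |s|$, and the $q^{d/2 \pm 1}|S|^2$ contribution is the accumulated fluctuation from the off-diagonal intersections. The delicate point is that the fluctuation size is governed by $\eta$ applied to a quantity built from $Q(c-c')$ and $r, r'$, so one must sum a character-type quantity over all pairs of centers and radii in $S$; here the potential degenerate pairs (where $c - c'$ is isotropic, or where the linear equation becomes inconsistent or trivial) must be counted separately and shown not to dominate. I expect the bulk of the technical effort to be in verifying that this degenerate contribution stays within the claimed error, and in tracking the four sub-cases (even $d$ with $i \in \{1,2\}$, and zero versus non-zero radius) so that the main term $q^{d/2-1}|S|^2$ versus $q^{d/2}|S|^2$ falls out correctly — the larger exponent in \eqref{eq:i1IncidenceBound} reflecting the extra isotropic structure present for $Q_1$ with zero radius.
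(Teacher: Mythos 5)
Your proposal is correct and takes essentially the same approach as the paper: your Cauchy--Schwarz second-moment inequality over all points of $\F_q^d$ is exactly the paper's \cref{thm:tomason} (applied as \cref{th:generalWeightedBound} with $L=S$, $R=P$, $U=\F_q^d$), and your reduction to pairwise intersection sizes, computed by intersecting one sphere with the hyperplane obtained by subtracting the two sphere equations, is exactly how the paper derives \cref{tab:N_2EvenD}. One small correction: the generic fluctuation in even dimension $d$ is $O(q^{(d-2)/2})$ rather than $O(q^{(d-1)/2})$, and no character-sum cancellation over pairs is actually needed for this theorem---the paper simply bounds every pair by the worst-case entry of \cref{tab:N_2EvenD}.
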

\cref{eq:i2IncidenceBound} improves the bound in \cref{thm:iosevichRudnev} for $|S| < q^{d/2}$, and improves the bound in \cref{th:simpleBound} for $|S| < q^{d/2 + 1}$.
Although the bound of \cref{eq:i1IncidenceBound} is much weaker than \cref{eq:i2IncidenceBound} or \cref{eq:i2ZeroIncidenceBound}, it is tight for $|S|=|P|=q^{d/2}$, as shown in \cref{const:zeroRadEvenDim}.
If we could replace the term $q^{d/2 - 1}|S|^2$ on the right hand side of \cref{eq:i2IncidenceBound} by $q^{d/2 - 3/4}|S|^2$, it would improve \cref{th:simpleBound} for all $|S| < q^{(d+3)/2}$.
Improvement beyond this point would lead to improved bounds for the Erd\H{o}s-Falconer distance problem in even dimensions.

Certain cases of \cref{th:smallSBound} were previously proved by Koh, Lee, and Pham \cite{koh2022finite}, who relied on a restriction estimate for cones in four dimensions.
In particular, they proved:
\begin{theorem}[Theorem 1.6, \cite{koh2022finite}]\label{th:oldEvenBound}
    Let $P$ be a set of points in $\F_q^d$, and let $S$ be a set of $\|\cdotp\mkern-3mu\|$-spheres in $\F_q^d$.
    \begin{enumerate}
        \item If $d \equiv 2 \mod 4$ and $q \equiv 3 \mod 4$ and $|S| \leq q^{d/2}$, then
        \[\left| I(P,S) - q^{-1}\,|P|\,|S| \right| \ll q^{(d-1)/2}|P|^{1/2}|S|^{1/2}. \]
        \item If $d \equiv 2 \mod 4$ and $q \equiv 1 \mod 4$, or if $d \equiv 0 \mod 4$, then the same conclusion holds under the assumption that $|S| \leq q^{(d-2)/2}$.
    \end{enumerate}
\end{theorem}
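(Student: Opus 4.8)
The plan is to bound the incidence count by the second moment of the sphere-degree function and then reduce that second moment to a sum of pairwise sphere intersections, following the pseudorandom-graph philosophy advertised above. For $x \in \F_q^d$ write $m(x) = \#\{s \in S : x \in s\}$, so that $I(P,S) = \sum_{x\in P} m(x)$. Since all spheres of a fixed radius type share a common size $N$, which I read off from the standard point-count formula for quadrics of even dimension (\cite[Section 6.2]{LN97}), I first apply Cauchy--Schwarz in the form
\[\Big(I(P,S) - q^{-1}|P|\,|S|\Big)^2 = \Big(\sum_{x\in P}(m(x) - q^{-1}|S|)\Big)^2 \le |P|\sum_{x\in\F_q^d}(m(x)-q^{-1}|S|)^2,\]
where I have enlarged the domain of summation from $P$ to $\F_q^d$ because the summand is non-negative. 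Expanding the square and using $\sum_x m(x) = \sum_{s} |s| = N|S|$ together with $\sum_x m(x)^2 = \sum_{s,s'\in S}|s\cap s'|$, the right-hand sum becomes
\[\sum_{s,s'\in S}|s\cap s'| - 2q^{-1}N|S|^2 + q^{d-2}|S|^2 = N|S| + \sum_{s\ne s'}|s\cap s'| - 2q^{-1}N|S|^2 + q^{d-2}|S|^2.\]
It thus remains to bound $\sum_{s\ne s'}|s\cap s'|$ from above.

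The heart of the argument is a pointwise estimate for $|s\cap s'|$ when $s\ne s'$. Setting $c = c(s')-c(s)$ and subtracting the defining equations $Q(x-c(s)) = r(s)$ and $Q(x-c(s'))=r(s')$ shows that $s\cap s'$ is the intersection of the single quadric $s$ with the hyperplane $\{x : B(x,c) = \lambda\}$, where $B$ is the bilinear form associated to $Q$ and $\lambda = \tfrac12(r(s)-r(s')+Q(c))$; here I may assume $c\ne 0$, since concentric spheres are disjoint and contribute $0$. I split into the anisotropic case $Q(c)\ne 0$ and the isotropic case $Q(c)=0$. When $Q(c)\ne0$ I decompose $\F_q^d = \langle c\rangle \perp c^\perp$ and reduce to counting points of a single sphere in the non-degenerate odd-dimensional form $Q|_{c^\perp}$; the odd-dimensional point-count formula then gives $|s\cap s'| = q^{d-2} + \eta(\cdot)\,q^{d/2-1}$, so the deviation from $q^{d-2}$ is at most $q^{d/2-1}$ in absolute value. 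When $Q(c)=0$ I instead split off a hyperbolic plane $H\ni c$, write $\F_q^d = H\perp H^\perp$, and reduce to counting points of $Q|_{H^\perp}$, a non-degenerate form of even dimension $d-2$; whenever $\lambda\ne 0$ (equivalently $r(s)\ne r(s')$) I again get exactly $q^{d-2}$, while for $\lambda = 0$ I get $q^{d-2} + v(r)\,\eta(\cdot)\,q^{d/2-1}$, a potentially large deviation when $v(r)=q-1$.

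The final step is to feed these estimates back into the second-moment identity, and here the three regimes of the theorem separate cleanly. For non-zero-radius spheres the even-dimensional point count in the isotropic case $\lambda=0$ carries the factor $v(r)=-1$ (since $r\ne0$) rather than $v(0)=q-1$, so every pair satisfies $|s\cap s'|\le q^{d-2}+q^{d/2-1}$; substituting $N = q^{d-1}\mp q^{d/2-1}$ makes the $q^{d-2}|S|^2$ contributions cancel against $2q^{-1}N|S|^2$ and leaves $(1+o(1))(q^{d-1}|S| + q^{d/2-1}|S|^2)$, which is \cref{eq:i2IncidenceBound}. For zero-radius spheres distinct spheres have distinct centers, so no concentric pairs arise, and the sign of the large isotropic deviation ($\lambda=0$, $v(0)=q-1$) is governed by the Witt type of $Q|_{H^\perp}$, which agrees with that of $Q$ because splitting off a hyperbolic plane preserves the type. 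For $Q = Q_2$ (elliptic) this deviation is negative and only decreases the second moment, again yielding $q^{d/2-1}|S|^2$ and hence \cref{eq:i2ZeroIncidenceBound}; for $Q = Q_1$ (hyperbolic) it is positive and of size $(q-1)q^{d/2-1}\approx q^{d/2}$, producing the weaker term $q^{d/2}|S|^2$ of \cref{eq:i1IncidenceBound}. I expect the main obstacle to be the isotropic case $Q(c)=0$: getting the hyperbolic-plane reduction right and, above all, correctly tracking the quadratic character so as to identify exactly when the large deviation is positive (the only situation that threatens the bound) is the delicate point on which the separation between the $i=1$ and $i=2$ zero-radius bounds rests.
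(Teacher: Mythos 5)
Your core argument is correct and is essentially the paper's own route: the paper also bounds incidences by a second-moment/pairwise-intersection inequality (\cref{thm:tomason}, specialized as \cref{th:generalWeightedBound}) and feeds in exact intersection counts (\cref{tab:N_2EvenD}, computed in the appendix) to obtain \cref{th:smallSBound}, of which \cref{th:oldEvenBound} is then a special case. Your Cauchy--Schwarz step is the Thomason lemma in disguise (expanding exactly around $q^{-1}|S|$ even spares you the density hypothesis on $p$ in \cref{thm:tomason}, at the cost of tracking the $-2q^{-1}N|S|^2$ term, which you do), and your computation of $|s\cap s'|$ by splitting off $\langle c\rangle$ when $Q(c)\neq 0$ and a hyperbolic plane when $Q(c)=0$ reproduces exactly the entries of \cref{tab:N_2EvenD}, including the sign distinction between $Q_1$ and $Q_2$ in the isotropic equal-radius case that separates \cref{eq:i2ZeroIncidenceBound} from \cref{eq:i1IncidenceBound}.

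However, the proposal stops one step short of the statement it was supposed to prove: you establish the three general bounds of \cref{th:smallSBound}, but \cref{th:oldEvenBound} concerns $\|\cdotp\mkern-3mu\|$-spheres of arbitrary (possibly mixed) radii under congruence conditions on $d$ and $q$ and a size hypothesis on $|S|$, and none of that is addressed. To finish you must (i) identify the form: $\|\cdotp\mkern-3mu\|$ is equivalent to $Q_2^d$ exactly when $d\equiv 2 \pmod 4$ and $q\equiv 3\pmod 4$, and to $Q_1^d$ otherwise --- this discriminant computation is precisely what creates the two enumerated cases and their different hypotheses; (ii) split $S$ into its zero-radius and nonzero-radius parts, apply your bounds to each, and recombine by the triangle inequality, since your three regimes never coexist in a single application of the second-moment bound; and (iii) invoke the hypothesis $|S|\leq q^{d/2}$ in case 1, so that $q^{d/2-1}|S|^2\leq q^{d-1}|S|$, respectively $|S|\leq q^{(d-2)/2}$ in case 2, so that $q^{d/2}|S|^2\leq q^{d-1}|S|$, which is what collapses your bounds to $\ll q^{(d-1)/2}|P|^{1/2}|S|^{1/2}$. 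None of these steps is deep, but (i) is the entire content of the case distinction in \cref{th:oldEvenBound}, so the proof is incomplete without them.
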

The notation $a \ll b$ means that, for each fixed $d$, there exists a constant $c_d$ such that $a \leq c_d b$ for all $q$.

\Cref{th:oldEvenBound} is a special case of \cref{th:smallSBound}, in the case that $S$ is a small set of spheres of arbitrary radius (possibly including zero), and $Q=\|\cdot\|$.
In particular, the first enumerated item in \cref{th:oldEvenBound} corresponds to the case $i=2$ in \cref{th:smallSBound}, and the second corresponds to \cref{eq:i1IncidenceBound}.

Comparing the two theorems, we can see that \cref{th:smallSBound} improves \cref{th:oldEvenBound} in a number of ways:
\begin{itemize}
    \item we improve the bound of \cref{th:simpleBound} for larger $|S|$,
    \item our bound replaces an implicit constant depending on dimension by $(1+o(1))$,
    \item since we explicitly differentiate between spheres of zero and non-zero radius, we are able to prove a much stronger bound for $Q_1^d$-spheres of non-zero radius,
    \item we include the case $i=2$, $d \equiv 0 \mod 4$, which is missing from \cref{th:oldEvenBound} since this quadratic form is never equivalent to $\|x\|$.
\end{itemize}

In odd dimensions, we prove the following new incidence bound:
\begin{restatable}{theorem}{ourOddBound}\label{th:ourOddBound}
    Let $d$ be odd, let $i \in \{3,4\}$.
    Let $R \subseteq \F^\times_q$ such that $(-1)^{i+1}\eta(-r) = -1$ for each $r \in R$.
    Let $P \subseteq \mathbb{F}_q^d$, and let $S$ be a set of $Q_i^d$-spheres with radii in $R$.
    Then,
    \[\left|I(P,S) - q^{-1}|P|\,|S|\right| \leq q^{(d-1)/2}|P|^{1/2}|S|^{1/2} + \sqrt{3}q^{(d-3)/4}|P|^{1/2}|S|. \]
\end{restatable}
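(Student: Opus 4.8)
The plan is to bound the variance of the point degrees and then apply Cauchy--Schwarz, which reduces the whole statement to a single pointwise bound on a pairwise sphere intersection. For $x \in \F_q^d$ write $m(x) = \#\{s \in S : x \in s\}$, so that $I(P,S) = \sum_{x \in P} m(x)$ and $I(P,S) - q^{-1}|P|\,|S| = \sum_{x \in P}\bigl(m(x) - q^{-1}|S|\bigr)$. By Cauchy--Schwarz and the non-negativity of the summands,
\[ \bigl(I(P,S) - q^{-1}|P|\,|S|\bigr)^2 \le |P| \sum_{x \in \F_q^d}\bigl(m(x) - q^{-1}|S|\bigr)^2 . \]
Since $\sqrt{a+b} \le \sqrt a + \sqrt b$, the theorem follows once I establish
\begin{equation}\label{eq:targetVariance}
\sum_{x \in \F_q^d}\bigl(m(x) - q^{-1}|S|\bigr)^2 \le q^{d-1}|S| + 3\,q^{(d-3)/2}|S|^2 .
\end{equation}
Expanding the square and using the identities $\sum_x m(x) = \sum_{s\in S}|s|$ and $\sum_x m(x)^2 = \sum_{s,s'\in S}|s \cap s'|$, inequality \eqref{eq:targetVariance} becomes a matter of estimating the total sphere size and the total pairwise intersection.

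Two inputs drive these estimates, both read off from the standard count of solutions of $Q(u)=r$ for nondegenerate $Q$ (see \cite[\S 6.2]{LN97}). First, every sphere of radius $r \in R$ has exactly $q^{d-1} - q^{(d-1)/2}$ points: in odd dimension the sphere size is $q^{d-1} + q^{(d-1)/2}\,\eta\bigl((-1)^{(d-1)/2}\det(Q)\,r\bigr)$, and the determinant values recorded in \cref{sec:quadraticForms} show that this character equals $(-1)^{i+1}\eta(-r)$, which is $-1$ for $r \in R$ by hypothesis. Second, and this is the heart of the matter, I claim the key intersection bound
\begin{equation}\label{eq:keyIntersection}
|s \cap s'| \le q^{d-2} + q^{(d-3)/2} \qquad \text{for distinct } s,s' \in S .
\end{equation}
Granting these, $\sum_{s}|s| = |S|(q^{d-1}-q^{(d-1)/2})$ and $\sum_{s\neq s'}|s\cap s'| \le |S|^2(q^{d-2} + q^{(d-3)/2})$, and substituting into the expanded square yields \eqref{eq:targetVariance} after a short computation (with room to spare).

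To prove \eqref{eq:keyIntersection}, fix distinct spheres with centers $y,y'$ and radii $r,r' \in R$. If $y=y'$ then $r\neq r'$ and $s\cap s'=\emptyset$, so assume $w := y'-y \neq 0$. Setting $u = x-y$ and letting $B$ be the symmetric bilinear form with $B(u,u)=Q(u)$, the two defining equations become $Q(u)=r$ and $B(u,w)=c$ with $c=(r-r'+Q(w))/2$; thus $s\cap s'$ is the intersection of the sphere $\{Q(u)=r\}$ with a genuine hyperplane. I would count this by restricting $Q$ to that hyperplane, splitting according to whether $w$ is anisotropic ($Q(w)\neq 0$) or isotropic ($Q(w)=0$). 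In the anisotropic case the orthogonal splitting $\F_q^d=\langle w\rangle\oplus w^\perp$ reduces the count to that of a nondegenerate $(d-1)$-dimensional sphere $\{Q|_{w^\perp}(v)=r''\}$ with $r''=r-c^2/Q(w)$; in the isotropic case, a hyperbolic partner for $w$ reduces it to a nondegenerate $(d-2)$-dimensional sphere. In each case the count is $q^{d-2}\pm q^{(d-3)/2}$, satisfying \eqref{eq:keyIntersection}, \emph{except} in the degenerate sub-cases where the restricted radius vanishes ($r''=0$ in the anisotropic case, $c=0$, i.e.\ $r=r'$, in the isotropic case), where a priori the count could be as large as $q^{d-2}+(q-1)q^{(d-3)/2}$.

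The main obstacle, and exactly where the hypothesis on $R$ is indispensable, is to rule out these large degenerate intersections; the key point is that in each degenerate sub-case the sign of the fluctuation is a quadratic character that the hypothesis pins to $-1$. In the isotropic sub-case $r=r'$, the same determinant bookkeeping as for the sphere size forces the relevant sign to be $(-1)^{i+1}\eta(-r)=-1$, giving a count of $q^{d-2}-q^{(d-1)/2}\le q^{d-2}$. In the anisotropic sub-case, $r''=0$ means $r\,Q(w)=c^2$ is a nonzero square, so $\eta(Q(w))=\eta(r)$; feeding this into the sign $(-1)^{i+1}\eta(-Q(w))$ of the $(d-1)$-dimensional count and using $(-1)^{i+1}\eta(-r)=-1$ again makes the sign $-1$, giving $q^{d-2}-(q-1)q^{(d-3)/2}\le q^{d-2}$. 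Hence the large case never occurs, \eqref{eq:keyIntersection} holds for every pair, and assembling the two estimates gives \eqref{eq:targetVariance} and the theorem. I expect the only genuinely delicate work to be these sign computations, namely tracking $\det(Q|_{w^\perp})$ and the determinant of the $(d-2)$-dimensional restriction through the quadratic-character evaluations.
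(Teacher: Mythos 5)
Your proposal is correct and takes essentially the same approach as the paper: a Cauchy--Schwarz/variance bound over all points of $\F_q^d$ (the content of \cref{thm:tomason,th:generalWeightedBound}) reduces the theorem to pairwise intersection estimates, and the hypothesis on $R$ is used exactly as in the paper's \cref{th:D=0ImpliesEqualEta} to force the degenerate intersections (your $r''=0$ and $c=0$ sub-cases, which are precisely the $D=0$ and $t=0,\,r_1=r_2$ rows of \cref{tab:N_2OddD}) to have negative sign. The only differences are presentational: you center at $p=q^{-1}$ rather than the exact density $q^{-1}-q^{-(d+1)/2}$, and you re-derive the intersection counts of \cref{tab:N_2OddD} by orthogonal decomposition rather than citing them, and both your bookkeeping and your sign computations check out against the paper's table.
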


Note that $q^{(d-1)/2}|P|^{1/2}|S|^{1/2} > q^{(d-3)/4}|P|^{1/2}|S|$ when $|S| < q^{(d+1)/2}$, so we obtain the bound of \cref{thm:iosevichRudnev} for spheres with many different radii when this holds.
We compare \cref{th:ourOddBound} to the following bound, proved by Koh and Pham \cite{koh2022point}:
\begin{theorem}[Theorems 1.4, 1.7 \cite{koh2022point}]\label{th:oldOddBound}
    Let $d$ be odd, let $P \subseteq \F_q^d$, and let $S$ be a set of $\|\cdotp\mkern-3mu\|$-spheres in $\F_q^d$, and suppose that $|P|,|S| \leq N$.
    \begin{enumerate}
        \item If $d \equiv 3 \mod 4$, $q \equiv 3 \mod 4$, and the spheres in $S$ all have square radii, then
        \[I(P,S) \ll q^{-1}N^2 + q^{(d-1)/2}N. \]
        \item If $d \equiv 1 \mod 4$, $q \equiv 3 \mod 4$, and the spheres in $S$ all have non-square radii, then the same conclusion holds.
    \end{enumerate}
\end{theorem}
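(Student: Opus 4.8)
The plan is to bound the incidences by a Cauchy--Schwarz / second-moment argument whose only nontrivial input is a uniform upper bound on the number of points in the intersection of two distinct spheres; the radius condition on $R$ is exactly what forces every such intersection to be small.

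First I would record the size of a single sphere. Applying the standard count for the number of solutions of a nondegenerate quadratic form in an odd number $d$ of variables (e.g. \cite[Theorem 6.27]{LN97}), a $Q_i^d$-sphere of radius $r \neq 0$ has $q^{d-1} + q^{(d-1)/2}\,\eta\big((-1)^{(d-1)/2} r \det(Q_i^d)\big)$ points. Using the displayed value of $\eta(\det(Q_i^d))$ and that $d$ is odd, the character simplifies to $(-1)^{i+1}\eta(-r)$, so the hypothesis on $R$ makes every sphere in $S$ have exactly $N := q^{d-1}-q^{(d-1)/2}$ points. Next, for each $x$ set $f(x) = \#\{s \in S : x \in s\}$ and $g(x) = f(x) - q^{-1}|S|$, so that $I(P,S) - q^{-1}|P|\,|S| = \sum_{x \in P} g(x)$ and Cauchy--Schwarz gives $\big(I(P,S)-q^{-1}|P|\,|S|\big)^2 \le |P|\sum_{x \in \F_q^d} g(x)^2$. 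Expanding the square and using $\sum_x f(x) = N|S|$ and $\sum_x f(x)^2 = \sum_{s,s' \in S}|s \cap s'|$, a direct computation yields
\[ \sum_{x} g(x)^2 = N|S| - q^{d-2}|S|^2 + 2q^{(d-3)/2}|S|^2 + \sum_{s \ne s'} |s \cap s'|. \]
Thus the theorem reduces to showing $\sum_{s \ne s'}|s\cap s'| \le (q^{d-2}+q^{(d-3)/2})|S|^2$, for then $\sum_x g(x)^2 \le q^{d-1}|S| + 3q^{(d-3)/2}|S|^2$, and applying $\sqrt{a+b}\le\sqrt a+\sqrt b$ to $\sqrt{|P|}$ times this bound produces exactly the two stated terms.

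The heart of the argument, and the main obstacle, is the pointwise intersection bound: for distinct spheres $s,s'$ with radii in $R$,
\[ |s \cap s'| \le q^{d-2} + q^{(d-3)/2}. \]
I would prove this by the usual reduction: if the centers agree the spheres are disjoint, and otherwise, writing $v$ for the difference of centers, $s \cap s'$ equals the intersection of $s$ with the radical hyperplane $\{y : 2B(y,v) = r - r' + Q(v)\}$, where $B$ is the bilinear form of $Q$. Diagonalizing according to whether $v$ is anisotropic ($Q(v)\neq0$) or isotropic, this becomes a count of a quadric in $d-1$ (even) variables or, in the isotropic case, a fibration over a quadric in $d-2$ (odd) variables, which I evaluate with the same solution-count formulas. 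Generic pairs give $q^{d-2}\pm q^{(d-3)/2}$; the danger is the two degenerate configurations that a priori produce a large intersection $q^{d-2} + q^{(d-1)/2}$: equal radii with $v$ isotropic, and the ``tangent'' case where the restricted radius vanishes.

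In both dangerous configurations, the leading character works out to exactly $(-1)^{i+1}\eta(-r)$ --- using $\eta(\det Q_i^d)$, the determinant $-1$ of a hyperbolic plane, and that $d$ is odd --- so the radius hypothesis flips the sign and turns $+q^{(d-1)/2}$ into $-q^{(d-1)/2}$, keeping the intersection below $q^{d-2}$. This is the crucial place where the condition on $R$ enters, and it is precisely the configuration realized by the extremal \cref{const:oddDim} when the opposite sign holds, which explains why that example cannot be beaten here. Carefully tracking these quadratic-character and determinant computations across the anisotropic and isotropic cases is the only delicate part; once the uniform intersection bound is established, the remaining steps are the elementary bookkeeping sketched above.
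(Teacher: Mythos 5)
First, a structural point: this paper never proves \cref{th:oldOddBound} at all. It is imported from Koh--Pham \cite{koh2022point} purely for comparison, so there is no ``paper proof'' of this exact statement to match against; the correct internal comparison is with the paper's own \cref{th:ourOddBound}, and there your proposal is essentially the paper's argument rediscovered. Your case translation is right: for $d\equiv 3$, $q\equiv 3 \pmod 4$ with square radii, $\|\cdotp\mkern-3mu\|$ is equivalent to $Q_3^d$ and $\eta(-r)=-1$; for $d\equiv 1$, $q\equiv 3 \pmod 4$ with non-square radii, it is equivalent to $Q_4^d$ and $\eta(-r)=1$; in both cases $(-1)^{i+1}\eta(-r)=-1$, exactly the hypothesis of \cref{th:ourOddBound}. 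Your expansion of $\sum_x g(x)^2$ is the content of \cref{thm:tomason}/\cref{th:generalWeightedBound} (the paper centers at the exact probability $p=q^{-1}-q^{-(d+1)/2}$ rather than at $q^{-1}$; your extra $2q^{(d-3)/2}|S|^2$ term is precisely the cost of that choice and is harmless). Your key uniform intersection bound $|s\cap s'|\le q^{d-2}+q^{(d-3)/2}$ is exactly what \cref{tab:N_2OddD} together with \cref{th:D=0ImpliesEqualEta} delivers: the only entries of order $q^{(d-1)/2}$ are the equal-radii/isotropic-center-distance case and the tangent case $D=0$, and in both the leading sign is $(-1)^{i+1}\eta(-t)=(-1)^{i+1}\eta(-r)=-1$ under the radius hypothesis (for the tangent case this uses the lemma's conclusion $\eta(t)=\eta(r_1)=\eta(r_2)$). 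So your route is elementary and intersection-based, in contrast with the non-elementary machinery behind the original result in \cite{koh2022point}; this elementary reproof is precisely what the paper's \cref{th:ourOddBound} accomplishes, and what it buys is both self-containedness and the sharper deviation form $|I(P,S)-q^{-1}|P|\,|S||$.

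Two repairs are needed, both minor. First, your closing claim that the Cauchy--Schwarz step ``produces exactly the two stated terms'' is not literally true: you obtain a third term $\sqrt{3}\,q^{(d-3)/4}|P|^{1/2}|S|\le \sqrt{3}\,q^{(d-3)/4}N^{3/2}$, which is not one of the stated terms. It is, however, absorbed by them: if $N\le q^{(d+1)/2}$ then $q^{(d-3)/4}N^{3/2}\le q^{(d-1)/2}N$, while if $N\ge q^{(d+1)/2}$ then $q^{(d-3)/4}N^{3/2}\le q^{-1}N^2$; you must say this, since otherwise the stated form of the bound does not follow. Second, do not reuse the letter $N$ for the sphere size $q^{d-1}-q^{(d-1)/2}$: in the statement being proved, $N$ is already the cap on $|P|$ and $|S|$, and the collision makes the final bookkeeping ambiguous.
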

\cref{th:ourOddBound} is neither a generalization of nor a special case of \cref{th:oldOddBound}.
\cref{th:oldOddBound} is stronger than \cref{th:ourOddBound} in the case that $|P| = |S| = N$ and $N$ is much larger than $q^{(d+1)/2}$.
On the other hand, \cref{th:ourOddBound} is quantitatively stronger than \cref{th:oldOddBound} when $|S| \leq |P|$ and $|S| \leq q^{(d+1)/2}$.
In addition, \cref{th:ourOddBound} includes a number of cases that \cref{th:oldOddBound} misses, including some that are relevant for $\|x\|$.
In particular, if $q \equiv 1 \mod 4$ and the spheres in $S$ all have square radii, then \cref{th:ourOddBound} applies to the quadratic form $\|x\| = Q_3^d(x)$.

\subsection{Organization}
In \cref{sec:combinatorial}, we introduce our graph-theoretic tool, and give a new proof of \cref{th:simpleBound}.
\cref{sec: int of spheres} recalls known results on the number of points in the pairwise intersections of spheres.
\cref{sec: new incidences} presents a new proof of \cref{thm:iosevichRudnev} in the odd dimension case, and includes the proofs of \cref{th:smallSBound} and \cref{th:ourOddBound}.

\section{Pseudo-random graphs}\label{sec:combinatorial}

For any graph $G$ and vertices $u,v \in V(G)$, we write $u \sim v$ if $\{u,v\} \in E(G)$.
For subsets $L,R \subseteq V(G)$, we define
\[E(L,R) = \#\{(u,v) \in L \times R: u \sim v\}. \]
We rely on the following bound, which is a mild generalization of \cite[Theorem 2]{thomason1989dense}:
\begin{theorem}\label{thm:tomason}
    Let $G$ be a bipartite graph with vertex classes $L$ and $U$, and let $p$ be a positive real with $|L|^{-1} \leq p \leq |L|^{-1}|U|^{-1}|E(L,U)|$.
    Then, for any $R \subseteq U$,
    \[\left(E(L,R) - p|L|\,|R| \right)^2 \leq |R|\left(\sum_{u \in U}\sum_{v_1 \neq v_2 \in L}(\mathbf{1}_{v_1 \sim u}\mathbf{1}_{v_2 \sim u} - p^2) + E(L,U) \right). \]
\end{theorem}
\begin{proof}
Using the Cauchy-Schwarz inequality,
\begin{align*}
    |R|^{-1} &\left(\sum_{u \in R} \sum_{v \in L} (\mathbf{1}_{v \sim u} - p) \right)^2 \\
    &\leq \sum_{u \in R} \left (\sum_{v \in L} (\mathbf{1}_{v \sim u} - p) \right)^2\\
    &\leq \sum_{u \in U} \left( \sum_{v \in L}(\mathbf{1}_{v \sim u} - p) \right)^2 \\
    &= \sum_{u \in U} \sum_{v_1 \neq v_2 \in L}\mathbf{1}_{v_1 \sim u}\mathbf{1}_{v_2 \sim u} + E(L,U) - 2p|L|E(L,U)+p^2|L|^2|U| \\
    &\leq \sum_{u \in U} \sum_{v_1 \neq v_2 \in L}(\mathbf{1}_{v_1 \sim u}\mathbf{1}_{v_2 \sim u} - p^2) + E(L,U).
\end{align*}
\end{proof}

For sets $P$ of points and $S$ of spheres, denote
\[
I(P,S) = \sum_{x \in P}\sum_{s \in S}\mathbf{1}_{x \in s}. 
\]

Here comes the new proof of \cref{th:simpleBound}, which works for arbitrary nondegenerate quadratic forms.

\begin{theorem}\label{th:simpleWeightedBound}
    Let $Q$ be a nondegenerate quadratic form on $\mathbb{F}_q^d$.
    Let $P \subseteq \F_q^d$, and let $S$ be a set of $Q$-spheres.
    Then,
    \begin{equation}\label{eq:simpleBoundInequality}
        |I(P,S)- q^{-1} \, |P|\, |S|| \leq \sqrt{q^d|P||S|}.
    \end{equation}
\end{theorem}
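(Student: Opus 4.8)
The plan is to apply \cref{thm:tomason} to the bipartite incidence graph $G$ whose left class is $L = P$ and whose right class $U$ is the set of all $q^{d+1}$ $Q$-spheres in $\F_q^d$ (thought of as center--radius pairs $(y,r)$), where a point $x$ is joined to a sphere $s$ precisely when $x \in s$. Setting $R = S \subseteq U$, we have $E(L,R) = I(P,S)$, and choosing $p = q^{-1}$ makes the main term satisfy $p|L|\,|R| = q^{-1}|P|\,|S|$, so that the left-hand side of the inequality in \cref{thm:tomason} is exactly $\left(I(P,S) - q^{-1}|P|\,|S|\right)^2$. Everything then reduces to evaluating the two quantities on the right-hand side.

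First, each point $x$ lies on exactly one sphere for each choice of center (given $y$, take $r = Q(x-y)$), so every vertex of $L$ has degree $q^d$ and hence $E(L,U) = q^d|P|$. In particular $|L|^{-1}|U|^{-1}E(L,U) = q^{-1}$, so $p = q^{-1}$ saturates the upper constraint on $p$. Second, and this is the key point, I claim that any two distinct points $x \ne x'$ lie on exactly $q^{d-1}$ common spheres. Writing $Q(v) = v^{T}Av$ with $A$ symmetric and nonsingular, a sphere $(y,r)$ contains both points iff $Q(x-y) = Q(x'-y) = r$; the radius is then forced by $y$, and after the term $y^{T}Ay$ cancels, the condition $Q(x-y) = Q(x'-y)$ becomes the single linear equation $2\,y^{T}A(x-x') = x^{T}Ax - (x')^{T}Ax'$. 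Since $A$ is nonsingular and $x \ne x'$, the coefficient vector $2A(x-x')$ is nonzero (here we use that $q$ is odd), so the solution set is an affine hyperplane with exactly $q^{d-1}$ points $y$, each giving one sphere.

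With these two counts, the pair sum becomes $\sum_{u\in U}\sum_{v_1\ne v_2\in L}\mathbf{1}_{v_1\sim u}\mathbf{1}_{v_2\sim u} = q^{d-1}|P|(|P|-1)$, while $p^2|L|(|L|-1)|U| = q^{-2}\cdot q^{d+1}\cdot|P|(|P|-1) = q^{d-1}|P|(|P|-1)$ exactly. Hence the corrected pair sum $\sum_{u}\sum_{v_1\ne v_2}\left(\mathbf{1}_{v_1\sim u}\mathbf{1}_{v_2\sim u} - p^2\right)$ vanishes identically, and \cref{thm:tomason} gives $\left(I(P,S) - q^{-1}|P|\,|S|\right)^2 \le |S|\,E(L,U) = q^{d}|P|\,|S|$. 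Taking square roots yields \cref{eq:simpleBoundInequality}.

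The crux of the argument is the exact, pair-independent count of $q^{d-1}$ common spheres through two points, which relies on nondegeneracy of $Q$ and is exactly what forces the second-moment error term to cancel against $p^2$, leaving only the linear term $E(L,U)$. Heuristically, this is why \cref{th:simpleBound} is so much easier than \cref{thm:iosevichRudnev}: restricting to a single radius spoils this clean cancellation, since one must then track how the number of fixed-radius spheres through a pair of points fluctuates about its mean. The only bookkeeping point is the admissibility of $p = q^{-1}$ in \cref{thm:tomason}: it meets the upper constraint with equality, and the lower constraint $|P| \ge q$ causes no trouble, since the proof of \cref{thm:tomason} invokes only the upper bound on $p$.
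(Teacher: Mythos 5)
Your proposal is correct and follows essentially the same route as the paper's own proof: apply \cref{thm:tomason} with $L=P$, $R=S$, $U$ the set of all $q^{d+1}$ spheres and $p=q^{-1}$, then observe that two distinct points lie on exactly $q^{d-1}$ common spheres (a hyperplane of centers), so the pair-sum term cancels exactly against $p^2$. Your explicit check that $p=q^{-1}$ is admissible (and that only the upper constraint on $p$ is actually used in the proof of \cref{thm:tomason}) is a small point of care that the paper leaves implicit.
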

\begin{proof}
    We will apply \cref{thm:tomason} with $L=P$, $R=S$, and $U$ the set of all $q^{d+1}$ spheres in $\F_q^d$.
    Each point $x$ is contained in exactly $q^d$ spheres: for each point $y \in \F_q^d$, there is a unique sphere centered $y$ that contains $x$.
    Hence, $I(P,U) = q^d|P|$ and we may choose $p=q^{-1}$.
    \cref{thm:tomason} yields
    \[\left(I(P,S) - q^{-1}|P|\,|S|\right)^2 \leq |S| \left(\sum_{s \in U} \sum_{x\neq y \in P} (\mathbf{1}_{x \in s}\mathbf{1}_{y \in s} - p^2) + q^{d}|P|\right). \]
    For each pair of distinct points $x \neq y \in \F_q^d$, the set $B(x,y)=\{z:Q(z-x)=Q(z-y)\}$ is a hyperplane.
    This is easy to check directly from the definition of $Q$: the quadratic terms of $Q(z-x)$ and $Q(z-y)$ are equal, and the linear terms are equal if and only if $x=y$.
    Hence, 
    \[\sum_{s \in U} \sum_{x\neq y \in P} \mathbf{1}_{x \in s}\mathbf{1}_{y \in s} = |P|(|P|-1)q^{d-1} = p^2|U|\,|P|(|P|-1),\]
    so $\sum_{s \in U} \sum_{x\neq y \in P} (\mathbf{1}_{x \in s}\mathbf{1}_{y \in s} - p^2) = 0$, which completes the proof.
\end{proof}

The basic geometric fact that enables this simple proof of \cref{th:simpleBound} is that each pair $(x,y)$ of distinct points in $\F_q^d$ is contained in exactly $q^{d-1}$ distinct spheres.
We obtain a $q^{d/2}$ factor on the right hand side of \cref{eq:simpleBoundInequality} since we extend the sum over all $q^{d+1}$ spheres.
We can hope to obtain the better $q^{(d-1)/2}$ factor in some cases (as in \cref{thm:iosevichRudnev}) by extending instead over the points.
However, this is complicated by the fact that the number of points in the intersection of two spheres depends on the radii of the spheres and the distance between their centers.
Hence, obtaining concrete bounds using this approach requires a finer analysis, which depends on the specific quadratic form and collection of spheres that we consider.

Here is the general bound that we rely on, which requires further analysis of the intersections of the spheres before it can be applied.
Note that we always apply the bound to collections of spheres having the same size.

\begin{comment}
\begin{theorem}\label{th:generalWeightedBound}
Let $Q$ be a nondegenerate quadratic form on  $\F_q^d$.
Let $P \subseteq \F_q^d$ be a set of points, and let $S$ be a set of $Q$-spheres.
Let $p$ be a positive real with $|S|^{-1} \leq p \leq q^{-d}|S|^{-1}\sum_{s \in S}|s|$.
Then,
\[\left(I(S,P) - p|S|\,|P|\right)^2 \leq |P|\left(\sum_{s_1 \neq s_2 \in S}(|s_1 \cap s_2| - p^2q^d) + \sum_{s \in S}|s| \right). \]
\end{theorem}
\end{comment}

\begin{theorem}\label{th:generalWeightedBound}
    Let $Q$ be a nondegenerate quadratic form on $\F_q^d$.
    Let $P$ be a set of points in $\F_q^d$, and $S$ a set of spheres such that, for each $s \in S$, the number of points in $s$ is $|s|=q^{d-1}+N$ for a fixed constant $N$.
    Let $p = q^{-1}+q^{-d}N$ be the probability that a uniformly random point in $\F_q^d$ is contained in a fixed sphere of $S$.
    Then,
    \[\left(I(P,S) - p\,|P|\,|S| \right)^2 \leq |P|\sum_{s_1 \neq s_2 \in S} \left(|s_1 \cap s_2| - q^dp^2 \right) + |P||S|(q^{d-1}+N). \]
\end{theorem}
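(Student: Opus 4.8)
The plan is to mirror the proof of \cref{th:simpleWeightedBound}, but to apply \cref{thm:tomason} with the roles of points and spheres reversed: I take $L = S$ and $R = P$, and let $U$ be the full point set $\F_q^d$. This is precisely the switch hinted at in the discussion after \cref{th:simpleWeightedBound}, where the authors note that summing over points rather than spheres should produce the stronger $q^{(d-1)/2}$-type factor. Since every sphere $s \in S$ has exactly $|s| = q^{d-1} + N$ points, the bipartite incidence count satisfies $E(S, U) = \sum_{s \in S} |s| = |S|(q^{d-1}+N)$, so the average density is $|S|^{-1}|U|^{-1}E(S,U) = q^{-d}(q^{d-1}+N) = q^{-1} + q^{-d}N = p$. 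I would first verify that this choice of $p$ satisfies the hypothesis $|S|^{-1} \le p \le |S|^{-1}|U|^{-1}E(S,U)$ of \cref{thm:tomason}; the upper bound holds with equality by the computation just made, and the lower bound is a mild size assumption that holds in the regime of interest.

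With these substitutions, \cref{thm:tomason} gives directly
\[
\left(E(S,P) - p\,|S|\,|P|\right)^2 \le |P|\left(\sum_{x \in U}\sum_{s_1 \ne s_2 \in S}\big(\mathbf{1}_{s_1 \ni x}\mathbf{1}_{s_2 \ni x} - p^2\big) + E(S,U)\right).
\]
Here $E(S,P) = I(P,S)$ and $E(S,U) = |S|(q^{d-1}+N)$, matching the two outer terms of the claimed inequality. It then remains only to rewrite the double sum. Interchanging the order of summation, for each fixed ordered pair $s_1 \ne s_2$ the inner sum $\sum_{x \in \F_q^d} \mathbf{1}_{s_1 \ni x}\mathbf{1}_{s_2 \ni x}$ counts exactly the points lying on both spheres, which is $|s_1 \cap s_2|$, while $\sum_{x \in \F_q^d} p^2 = q^d p^2$. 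Thus
\[
\sum_{x \in \F_q^d}\sum_{s_1 \ne s_2 \in S}\big(\mathbf{1}_{s_1 \ni x}\mathbf{1}_{s_2 \ni x} - p^2\big) = \sum_{s_1 \ne s_2 \in S}\left(|s_1 \cap s_2| - q^d p^2\right),
\]
and substituting this into the displayed bound yields the statement exactly.

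This argument is essentially bookkeeping once the correct specialization of \cref{thm:tomason} is identified, so I do not expect a genuine obstacle in the proof itself. The one point requiring care is the verification of the density hypothesis: unlike in \cref{th:simpleWeightedBound}, where $U$ was the set of all spheres and $p = q^{-1}$ fell out cleanly, here $p = q^{-1} + q^{-d}N$ depends on $N$ and must be checked to lie in the admissible interval, in particular that the condition $|S|^{-1} \le p$ is met. The deeper difficulty is deferred rather than present: \cref{th:generalWeightedBound} reduces the incidence problem to controlling the sum $\sum_{s_1 \ne s_2}(|s_1 \cap s_2| - q^d p^2)$ of normalized pairwise sphere intersections, and extracting concrete bounds (as in \cref{th:smallSBound} and \cref{th:ourOddBound}) will require the finer, quadratic-form-specific analysis of sphere intersections developed in \cref{sec: int of spheres}. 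The theorem itself, however, is just the clean packaging of \cref{thm:tomason} into this geometric language.
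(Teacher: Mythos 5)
Your proposal is correct and is exactly the paper's proof: the paper deduces \cref{th:generalWeightedBound} by applying \cref{thm:tomason} with $L=S$, $R=P$, and $U=\F_q^d$, precisely the specialization you chose, and your unpacking of the double sum into $\sum_{s_1 \neq s_2}(|s_1 \cap s_2| - q^d p^2)$ is the routine bookkeeping the paper leaves implicit. Your remark about verifying $|S|^{-1} \leq p$ is a fair point of care (the paper glosses over it), but it is harmless since that hypothesis is not actually used in the proof of \cref{thm:tomason}, only the upper bound on $p$ is.
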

\begin{proof}
    This follows directly from \cref{thm:tomason}, taking $L=S$, $R=P$, and $U = \F_q^d$.
\end{proof}

\section{Pairwise intersections of spheres}\label{sec: int of spheres}

In order to apply \cref{th:generalWeightedBound} in specific cases, we need to understand the sizes and pairwise intersections of spheres in $\F_q^d$.
This is completely standard and well studied.

The size of a sphere $s$ depends only on $Q$ and the quadratic character $\eta(-r)$ where $r$ is the radius of $s$, and is equal to $q^{d-1} + N(Q,\eta(-r))$, where $N$ is a function that takes a quadratic form and integer in $\{-1,0,1\}$ as arguments.

For even $d$ and $i \in \{1,2\}$ \cite[Theorem 6.26]{LN97}:
\begin{align}
N(Q_i^d,\eta(-r)) &= (-1)^i q^{(d-2)/2} \text{ for $r \neq 0$, and} \label{eq:NevenDnonzeroR}\\
N(Q_i^d,0) &= (-1)^{i+1}(q^{d/2} - q^{(d-2)/2}). \label{eq:NevenDzeroR}
\end{align}
For $d$ odd ($i \in \{3,4\}$) \cite[Theorem 6.27]{LN97}:
\begin{align}
    N(Q_i^d,\eta(-r)) &= (-1)^{i+1}\eta(-r)q^{(d-1)/2} \text{ for $r \neq 0$, and} \label{eq:NoddDnonzeroR}\\
    N(Q_i^d,0) &= 0. \label{eq:NoddDzeroR}
\end{align}
The number of points in the intersection of a pair $s_1 \neq s_2$ of distinct $Q$-spheres depends on $Q$, the radii $r_1,r_2$ of the spheres, and the distance $t = Q(c_1-c_2)$ between the centers of the spheres, and is equal to 
$q^{d-2}+N_2(Q,r_1,r_2,t).$
The values of $N_2$ for various parameters are listed in \cref{tab:N_2EvenD,tab:N_2OddD}, where 
\[
D(t,r_1,r_2)=t^2+r_1^2+r_2^2-2(tr_1 + tr_2 + r_1r_2).
\]
Although the calculations to obtain these values are standard (see, for example \cite[Exercise 6.31]{LN97}), we include them in the appendix for the reader's convenience.

\begin{table}[h]
\begin{center}
\begin{tabular}{ccc||c}
\multicolumn{3}{c||}{}   & The values of $N_2(Q_i^d,r_1,r_2,t)$ for $i=1,2$\\ \hline \hline
\multicolumn{1}{c|}{\cref{eq:t0r}} & \multicolumn{1}{c|}{$t=0$} & \multicolumn{1}{c||}{$r_1=r_2=0$} & $(-1)^{i+1}(q^{d/2} - q^{\frac{d-2}{2}})$ \\ \hline
\multicolumn{1}{c|}{\cref{eq:t0r}} & \multicolumn{1}{c|}{$t=0$} & \multicolumn{1}{c||}{$r_1=r_2=r\neq 0$} & $(-1)^{i}q^{\frac{d-2}{2}}$ \\ \hline
\multicolumn{1}{c|}{\cref{eq:t0r1r2}} & \multicolumn{1}{c|}{$t=0$} & \multicolumn{1}{c||}{$r_1 \ne r_2$} &  $0$ \\ \hline
\multicolumn{1}{c|}{\cref{eq:D0}} & \multicolumn{1}{c|}{$t\ne 0$} & \multicolumn{1}{c||}{$D=0$} &  $0$ \\ \hline
\multicolumn{1}{c|}{\cref{eq:D1}} & \multicolumn{1}{c|}{$t \ne 0$} & \multicolumn{1}{c||}{$\eta(D)=1$} &  $(-1)^{i+1}q^{\frac{d-2}{2}}$ \\ \hline
\multicolumn{1}{c|}{\cref{eq:Dm1}} & \multicolumn{1}{c|}{$t \ne 0$} & \multicolumn{1}{c||}{$\eta(D)=-1$} & $(-1)^i q^{\frac{d-2}{2}}$ \\ 
\end{tabular}
\caption{The values of $N_2(Q_i^d,r_1,r_2,t)$ for $i=1,2$ when $d$ is even}\label{tab:N_2EvenD}
\end{center}
\end{table}

\begin{table}[h]
\begin{center}
\begin{tabular}{ccc||c}
\multicolumn{3}{c||}{}   & The values of $N_2(Q_i^d,r_1,r_2,t)$ for $i=3,4$\\ \hline \hline
\multicolumn{1}{c|}{\cref{eq:t0r}} & \multicolumn{1}{c|}{$t=0$} & \multicolumn{1}{c||}{$r_1=r_2=0$} & $0$ \\ \hline
\multicolumn{1}{c|}{\cref{eq:t0r}} & \multicolumn{1}{c|}{$t=0$} & \multicolumn{1}{c||}{$r_1=r_2=r\neq 0$} & $(-1)^{i+1}\eta(-r)q^{\frac{d-1}{2}}$ \\ \hline
\multicolumn{1}{c|}{\cref{eq:t0r1r2}} & \multicolumn{1}{c|}{$t=0$} & \multicolumn{1}{c||}{$r_1 \ne r_2$} &  $0$ \\ \hline
\multicolumn{1}{c|}{\cref{eq:D0}} & \multicolumn{1}{c|}{$t\ne 0$} & \multicolumn{1}{c||}{$D=0$} &  $(-1)^{i+1}\eta(-t)(q^{\frac{d-1}{2}}-q^{\frac{d-3}{2}})$ \\ \hline
\multicolumn{1}{c|}{\cref{eq:D1}} & \multicolumn{1}{c|}{$t \ne 0$} & \multicolumn{1}{c||}{$\eta(D)=1$} &  $(-1)^{i}\eta(-t)q^{\frac{d-3}{2}}$ \\ \hline
\multicolumn{1}{c|}{\cref{eq:Dm1}} & \multicolumn{1}{c|}{$t \ne 0$} & \multicolumn{1}{c||}{$\eta(D)=-1$} & $(-1)^{i}\eta(-t)q^{\frac{d-3}{2}}$ \\ 
\end{tabular}
\caption{The values of $N_2(Q_i^d,r_1,r_2,t)$ for $i=3,4$ when $d$ is odd}\label{tab:N_2OddD}
\end{center}
\end{table}

\section{New incidence bounds}\label{sec: new incidences}

\cref{th:smallSBound} follows directly from \cref{th:generalWeightedBound} using \cref{tab:N_2EvenD}.

\smallSBound*
\begin{proof}
    We will apply \cref{th:generalWeightedBound} for each equation separately.

    First, suppose that $i \in \{1,2\}$ and $S$ is a set of spheres of non-zero radius.
    Referring to \cref{eq:NevenDnonzeroR} and using the notation of \cref{th:generalWeightedBound}, we have $p^2q^d = q^{d-2} + (2+o(1))(-1)^{i}q^{d/2 - 2}$.
    For any pair of spheres $s_1 \neq s_2 \in S$, by \cref{tab:N_2EvenD} we have that $|s_1 \cap s_2| \leq q^{d-2} + q^{d/2 - 1}$.
    Hence, $\sum_{s_1 \neq s_2 \in S}(|s_1 \cap s_2| - p^2q^d) \leq (1+o(1))q^{d/2 - 1}|S|^2$.
    \Cref{eq:i2IncidenceBound} now follows directly from \cref{th:generalWeightedBound}.

    Now suppose that $i=2$ and $S$ is a set of spheres of zero radius.
    Referring to \cref{eq:NevenDzeroR}, we have $p^2q^d = q^{d-2} - (2+o(1))q^{d/2 - 1}$.
    Since $D(t,0,0)=t^2$, we have that $\eta(D) = 1$ if $t \neq 0$.
    Hence, referring to \cref{tab:N_2EvenD}, for any pair of spheres $s_1 \neq s_2 \in S$, we have $|s_1 \cap s_2| \leq q^{d-2} - q^{d/2 -1}$.
    Combining this with the bound on $p^2q^d$, this implies that $\sum_{s_1 \neq s_2 \in S}(|s_1 \cap s_2| - p^2q^d) \leq (1+o(1))|S|^2q^{d/2 - 1}$.
    \Cref{eq:i2ZeroIncidenceBound} now follows directly from \cref{th:generalWeightedBound}.

    Now suppose that $i=1$ and $S$ is a set of spheres of zero radius.
    By the same strategy used in the previous paragraphs, we have that $\sum_{s_1 \neq s_2 \in S} (|s_1 \cap s_2| - p^2q^d) \leq (1+o(1)) q^{d/2}|S|^2$, which leads directly to the claimed result.
\end{proof}
    The key difference between the proof of \cref{eq:i1IncidenceBound} and that of \cref{eq:i2IncidenceBound} and \cref{eq:i2ZeroIncidenceBound}, for $s_1 \neq s_2$ zero-radius $Q_1^d$-spheres whose centers are at distance zero, $|s_1 \cap s_2| = q^{d-2} + (1+o(1))q^{d/2}$.
    No surprise then that, in \cref{const:zeroRadEvenDim} (which shows that \cref{eq:i1IncidenceBound} is tight), all of the spheres have their centers at distance $0$.

Next, we give a new proof of a bound on the number of incidences between points and spheres of $0$ radius in odd dimensions, which is implicitly given in \cite{koh2015distance} (see Lemma 2.1 and Proposition 2.2 (1) in \cite{koh2015distance}). 
To prove this bound, we will need to use the fact that the entries in the fifth and sixth lines of \cref{tab:N_2OddD} depend on $\eta(-t)$, and hence we can obtain cancellation when the number of spheres is reasonably large.

\begin{lemma}\label{th:quadraticResidueDists}
    Let $C \subseteq \F_q^d$.
    Then,
    \[\left | \sum_{x,y \in C} \eta(Q(x-y)) \right | \leq \sqrt{2}q^{(d+1)/2}|C|. \]
\end{lemma}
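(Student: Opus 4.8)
The plan is to control this diagonal character sum by a second‑moment (Cauchy--Schwarz) argument, which reduces it to the additive correlation of $\eta\circ Q$ with itself, and then to evaluate that correlation using the pairwise intersection numbers in \cref{tab:N_2OddD}. Write $T=\sum_{x,y\in C}\eta(Q(x-y))$, and for $x\in\F_q^d$ set $g(x)=\sum_{y\in C}\eta(Q(x-y))$, so that $T=\sum_{x\in C}g(x)$. By Cauchy--Schwarz and the nonnegativity of $g(x)^2$,
\[ T^2 \le |C|\sum_{x\in C}g(x)^2 \le |C|\sum_{x\in\F_q^d}g(x)^2 = |C|\sum_{y,y'\in C}K(y'-y), \qquad K(w):=\sum_{u\in\F_q^d}\eta(Q(u))\eta(Q(u-w)). \]
The entire content now lies in evaluating $K(w)$.

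To compute $K(w)$ I would read it as a weighted count of sphere intersections. Grouping $u$ by the pair of radii $(r_1,r_2)=(Q(u),Q(u-w))$ gives $K(w)=\sum_{r_1,r_2}\eta(r_1)\eta(r_2)\,\#\{u:Q(u)=r_1,\ Q(u-w)=r_2\}$, and every term with $r_1=0$ or $r_2=0$ vanishes. For $w\neq 0$ this inner count is exactly the intersection of the $Q$-spheres of radii $r_1,r_2$ centred at $0$ and $w$, whose centres are at distance $t=Q(w)$; by \cref{tab:N_2OddD} it equals $q^{d-2}+N_2(Q,r_1,r_2,t)$. The constant $q^{d-2}$ contributes nothing, since $\sum_{r\ne0}\eta(r)=0$, so only $N_2$ survives. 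The crucial structural point — the one the surrounding text flags — is that for $t\neq0$ the value of $N_2$ is the \emph{same} constant $(-1)^{i}\eta(-t)q^{(d-3)/2}$ whenever $D\neq0$, and differs only on the degenerate locus $D=0$. Because $\sum_{r_1,r_2\ne0}\eta(r_1)\eta(r_2)=0$, the off-locus part cancels and $K(w)$ collapses to a sum over $\{D=0\}$ weighted by $\eta(r_1)\eta(r_2)$.

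To finish the evaluation I would parametrize $\{D=0\}$. Writing $D=(r_1-r_2)^2-2t(r_1+r_2)+t^2$, the solutions for fixed $t\ne0$ are precisely $r_1=(m+t)^2/(4t)$, $r_2=(m-t)^2/(4t)$ for $m\in\F_q$, and $r_1,r_2\ne0$ exactly when $m\ne\pm t$. For each such $m$ the product $r_1r_2=\big((m+t)(m-t)/(4t)\big)^2$ is a nonzero square, so $\eta(r_1)\eta(r_2)=1$; there are $q-2$ such $m$. Combining this with the $D=0$ entry $N_2=(-1)^{i+1}\eta(-t)(q^{(d-1)/2}-q^{(d-3)/2})$ gives, after the cancellation above, the clean formula $K(w)=(-1)^{i+1}\eta(-Q(w))\,q^{(d-1)/2}(q-2)$ for $w\ne0$ (in particular $K(w)=0$ when $Q(w)=0$). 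A separate, easier computation using that for odd $d$ a zero-radius sphere has exactly $q^{d-1}$ points (\cref{eq:NoddDzeroR}) and the $t=0$ rows of \cref{tab:N_2OddD} yields $K(0)=q^{d-1}(q-1)$.

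Substituting back, and using $\sum_{y\ne y'}\eta(-Q(y'-y))=\eta(-1)T$, the second moment becomes
\[ \sum_{x\in\F_q^d} g(x)^2 = |C|\,q^{d-1}(q-1) + \sigma\, q^{(d-1)/2}(q-2)\,T, \qquad \sigma=(-1)^{i+1}\eta(-1)\in\{-1,+1\}. \]
Hence $T^2\le A+B|T|$ with $A=|C|^2q^{d-1}(q-1)$ and $B=|C|q^{(d-1)/2}(q-2)$. The arithmetic is pleasant: one checks $B^2+4A=|C|^2q^{d+1}$ exactly, so solving the quadratic gives $|T|\le\tfrac12\big(B+|C|q^{(d+1)/2}\big)=q^{(d-1)/2}(q-1)|C|<q^{(d+1)/2}|C|$, which is even a little stronger than the stated bound. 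The main obstacle is the evaluation of $K(w)$; obtaining the clean proportionality $K(w)\propto\eta(-Q(w))$ is exactly the place where the $\eta(-t)$-dependence of the intersection numbers forces the cancellation, after which the Cauchy--Schwarz step is routine.
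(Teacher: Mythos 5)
Your proof is correct, but it takes a genuinely different route from the paper's. The paper disposes of the lemma in three lines: split the spheres centered at points of $C$ into the family $S_+$ with square radii and the family $S_-$ with non-square radii, write $\sum_{x,y\in C}\eta(Q(x-y)) = I(C,S_+)-I(C,S_-)$, and apply \cref{th:simpleWeightedBound} to each family (using $|S_+|=|S_-|=\tfrac12 |C|(q-1)$ and the triangle inequality); this is where the $\sqrt{2}$ comes from, and the argument works for every nondegenerate form in every dimension. Your argument instead runs a second-moment computation and evaluates the autocorrelation $K(w)=\sum_u\eta(Q(u))\eta(Q(u-w))$ exactly from the intersection table; I checked the details --- the parametrization of $\{D=0\}$ by $r_1=(m+t)^2/(4t)$, $r_2=(m-t)^2/(4t)$, the resulting identity $K(w)=(-1)^{i+1}\eta(-Q(w))\,q^{(d-1)/2}(q-2)$, and the final arithmetic $B^2+4A=|C|^2q^{d+1}$ --- and they all hold, so you in fact obtain the stronger bound $|T|\le q^{(d-1)/2}(q-1)|C|$, beating the stated constant. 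Two caveats are worth recording. First, your proof only establishes the lemma for odd $d$: it leans on \cref{tab:N_2OddD} and \cref{eq:NoddDzeroR}, whereas the statement (and the paper's proof) carries no parity restriction; this is harmless for the paper, since the lemma is invoked only in \cref{th:incidenceBoundOddD0R} and \cref{th:newIosevichRudnev}, both odd-dimensional, but the restriction should be stated. Second, your derivation of the clean formula for $K(w)$ covers only $t=Q(w)\ne 0$; the case $w\ne 0$ with $Q(w)=0$ needs the separate (easy) check from the $t=0$ rows of \cref{tab:N_2OddD} --- the only surviving terms are the diagonal ones $r_1=r_2=r\ne 0$, which contribute $\sum_{r\ne 0}(-1)^{i+1}\eta(-r)q^{(d-1)/2}=0$ --- so this should be spelled out rather than folded into ``in particular.'' In exchange for its length and loss of generality, your approach buys a sharper constant and makes explicit the cancellation mechanism $K(w)\propto\eta(-Q(w))$ that the paper exploits only qualitatively via incidence bounds.
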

\begin{proof}
    Let $S_+$ be the set of spheres centered at points of $C$ whose radii are squares, and let $S_-$ be the set of spheres centered at points of $C$ whose radii are non-squares.
    Then, using \cref{th:simpleWeightedBound} and the fact that $|S_+|,|S_-| = \frac{1}{2}|C|(q-1)$,
    \begin{align*}
        \left | \sum_{x,y \in C} \eta(Q(x-y)) \right | &= \left | \sum_{x,y \in C} \mathbf{1}_{\eta(Q(x-y))=1} - \sum_{x,y \in C} \mathbf{1}_{\eta(Q(x-y))=-1} \right | \\ &= \left | I(C,S_+) - I(C,S_-) \right | 
        \\ &\leq \sqrt{2}q^{(d+1)/2}|C|.
    \end{align*}
\end{proof}

\begin{theorem}\label{th:incidenceBoundOddD0R}
    Let $d$ be odd, let $i \in \{3,4\}$, let $P \subseteq \mathbb{F}_q^d$, and let $S$ be a set of $Q_i^d$-spheres with zero radius.
    Then,
    \[|I(P,S) - q^{-1}|P|\,|S|| < \left((1+\sqrt{2}) \, q^{d-1}|P||S|\right)^{1/2}. \]
\end{theorem}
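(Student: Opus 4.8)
The plan is to apply \cref{th:generalWeightedBound} to $Q = Q_i^d$, the given $P$, and $S$, and then to extract square-root cancellation from the resulting pairwise-intersection sum via \cref{th:quadraticResidueDists}.

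First I would fix the parameters. By \cref{eq:NoddDzeroR}, in odd dimension every zero-radius $Q_i^d$-sphere contains exactly $q^{d-1}$ points, so in the notation of \cref{th:generalWeightedBound} we have $N = 0$, $p = q^{-1}$, $q^dp^2 = q^{d-2}$, and the additive term $|P||S|(q^{d-1}+N)$ is just $q^{d-1}|P||S|$. Thus everything reduces to showing that
\[\Bigl|\sum_{s_1 \neq s_2 \in S}\bigl(|s_1 \cap s_2| - q^{d-2}\bigr)\Bigr| \leq \sqrt{2}\,q^{d-1}|S|,\]
because \cref{th:generalWeightedBound} then gives $(I(P,S)-q^{-1}|P||S|)^2 \le |P|\sqrt2\, q^{d-1}|S| + q^{d-1}|P||S| = (1+\sqrt2)q^{d-1}|P||S|$, and taking square roots yields the claimed bound; the strict inequality reflects a little slack in the Cauchy--Schwarz step underlying \cref{thm:tomason}, which I would not track precisely.

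Next I would evaluate the intersection sum using \cref{tab:N_2OddD}. Let $C$ be the set of centers of the spheres of $S$, so $|C|=|S|$, and write $t = Q(c_1-c_2)$ for a pair of centers. Pairs with $t=0$ give $|s_1 \cap s_2| = q^{d-2}$ and contribute nothing. For $t \neq 0$ we have $D(t,0,0) = t^2$, a nonzero square, so $\eta(D)=1$ and $|s_1\cap s_2| - q^{d-2} = (-1)^i\eta(-t)q^{(d-3)/2}$. Therefore
\[\sum_{s_1 \neq s_2 \in S}\bigl(|s_1\cap s_2| - q^{d-2}\bigr) = (-1)^i q^{(d-3)/2}\!\!\sum_{\substack{c_1,c_2 \in C \\ c_1 \neq c_2,\ Q(c_1-c_2)\neq 0}}\!\!\eta\bigl(-Q(c_1-c_2)\bigr).\]
Since $\eta(0)=0$, the excluded diagonal and isotropic pairs contribute nothing, so the inner sum equals $\sum_{c_1,c_2\in C}\eta(-Q(c_1-c_2)) = \eta(-1)\sum_{c_1,c_2\in C}\eta(Q(c_1-c_2))$ by multiplicativity of $\eta$. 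Now \cref{th:quadraticResidueDists} bounds the absolute value of the last sum by $\sqrt2\,q^{(d+1)/2}|C|$; multiplying by $q^{(d-3)/2}$ gives exactly $\sqrt2\,q^{d-1}|S|$, which is the reduction target.

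The substance of the argument is the cancellation supplied by \cref{th:quadraticResidueDists}: the naive bound $|s_1\cap s_2|-q^{d-2}\le q^{(d-3)/2}$ would only give $q^{(d-3)/2}|S|^2$ for the intersection sum, reproducing a \cref{th:simpleBound}-strength estimate, whereas the real gain comes from the oscillating factor $\eta(-t)$ in the table, so that summing over pairs of centers produces a character sum with genuine cancellation. I expect no serious obstacle beyond correctly specializing \cref{tab:N_2OddD} to $r_1=r_2=0$ and noting the two routine facts that $D=t^2$ forces $\eta(D)=1$ for every $t\neq 0$, and that the boundary pairs may be added back at no cost because $\eta$ annihilates $0$.
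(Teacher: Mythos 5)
Your proof is correct and follows essentially the same route as the paper's: apply \cref{th:generalWeightedBound} with $N=0$ and $p=q^{-1}$, convert the pairwise-intersection sum into a character sum over pairs of centers via \cref{tab:N_2OddD}, and bound that sum by \cref{th:quadraticResidueDists}. The only (harmless) difference is that you invoke \cref{th:quadraticResidueDists} unconditionally, whereas the paper splits into the cases $|S| < q^{(d+1)/2}$ and $|S| \geq q^{(d+1)/2}$ --- a distinction that is redundant, since the lemma's bound of $\sqrt{2}\,q^{d-1}|S|$ holds for every $|S|$.
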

\begin{proof}
    By \cref{eq:NoddDzeroR}, for each $s \in S$ we have $|s|=q^{d-1}$.
    Since $D(t,0,0)=t^2$, we have that $\eta(D(t,0,0)) = 0$ if and only if $t=0$.
    Using \cref{tab:N_2OddD}, 
    \begin{equation} \label{eq:initialBoundOddDZeroR}
        \sum_{s_1 \neq s_2 \in S} (|s_1 \cap s_2| - q^{d-2}) = q^{(d-3)/2} \sum_{s_1 \neq s_2 \in S} \eta(-Q(c_1-c_2)).
    \end{equation}
    Here, $c_1,c_2$ are centers of the spheres $s_1,s_2$, respectively.
    If $|S| < q^{(d+1)/2}$, then we bound the right side of \cref{eq:initialBoundOddDZeroR} by $q^{(d-3)/2}|S|^2 \leq q^{d-1}|S|$.
    If $|S| \geq q^{(d+1)/2}$, then we use \cref{th:quadraticResidueDists} to bound the right side of \cref{eq:initialBoundOddDZeroR} by $\sqrt{2}q^{(d-3)/2}q^{(d+1)/2}|S| \leq \sqrt{2}q^{d-1}|S|$.
    In either case, we have the bound
    \[\left| \sum_{s_1,s_2 \in S} (|s_1 \cap s_2| -q^{d-2})\right| \leq \sqrt{2}q^{d-1}|S|.  \]
    Using \cref{th:generalWeightedBound} with $p=q^{-1}$, we have
    \begin{align*}
        (I(P,S) - q^{-1}|P|\,|S|)^2 \leq (\sqrt{2}+1)q^d|P||S|,
    \end{align*}
    which immediately implies the conclusion of the theorem.
\end{proof}

Here comes the new proof of the odd-dimension case of \cref{thm:iosevichRudnev}.
Note that the constant factor we obtain is slightly weaker than that in \cref{thm:iosevichRudnev}.
We are not very careful to optimize this constant factor, but we cannot improve the bound of \cref{thm:iosevichRudnev} without some additional new idea.
It would be interesting to obtain a constant smaller than $2$.

Let $P \subseteq \mathbb{F}_q^d$, and let $S$ be a set of $Q_i^d$-spheres each having radius $r \neq 0$.
Denote
\[I^\Delta_r(n,m) = \max_{|P|=n,|S|=m}\left|I(P,S) - q^{-1}|P|\,|S|\right|.\]
Note that $I^\Delta_r(m,n) = I^\Delta_{r'}(m,n)$ if $\eta(r) = \eta(r')$, since the number of incidences between $P$ and $S$ is preserved by dilation.
\begin{theorem}\label{th:newIosevichRudnev}
    Let $d$ be odd, let $i \in \{3,4\}$, let $n,m$ be positive integers and $r \in \F_q^\times$.
    Then,
    \[I^\Delta_r(n,m) < 3q^{(d-1)/2}m^{1/2}n^{1/2}. \]
\end{theorem}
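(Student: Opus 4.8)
The plan is to apply \cref{th:generalWeightedBound} with $L=S$, $R=P$, and $U=\F_q^d$. Since every sphere in $S$ has the same nonzero radius $r$, each has size $|s|=q^{d-1}+N$ with $N=(-1)^{i+1}\eta(-r)\,q^{(d-1)/2}$ by \cref{eq:NoddDnonzeroR}, and distinct spheres have distinct centers, so $m=|S|$ equals the number of centers. Writing $\sigma=(-1)^{i+1}\eta(-r)\in\{-1,1\}$ and $p=q^{-1}+\sigma q^{-(d+1)/2}$, \cref{th:generalWeightedBound} reduces the whole problem to controlling $\Sigma=\sum_{s_1\neq s_2}\bigl(|s_1\cap s_2|-q^dp^2\bigr)$. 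The first step is to read off from \cref{tab:N_2OddD} the size $|s_1\cap s_2|$ as a function of the center distance $t=Q(c_1-c_2)$; with both radii equal to $r$ the discriminant is $D=t(t-4r)$, so $D=0$ exactly when $t\in\{0,4r\}$.

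The key structural observation is that, away from the two tangency distances $t\in\{0,4r\}$, the excess $|s_1\cap s_2|-q^{d-2}$ equals the pure character term $-\sigma\eta(rt)q^{(d-3)/2}$, whereas at $t\in\{0,4r\}$ the intersection is larger by an additional $q^{(d-1)/2}$. Collecting these and using $q^dp^2=q^{d-2}+2\sigma q^{(d-3)/2}+q^{-1}$, one finds
\[
\Sigma=-\sigma q^{(d-3)/2}\!\!\sum_{s_1\neq s_2}\eta(rt)\;+\;\sigma q^{(d-1)/2}(n_0+n_{4r})\;-\;\bigl(2\sigma q^{(d-3)/2}+q^{-1}\bigr)m(m-1),
\]
where $n_0$ and $n_{4r}$ count the ordered pairs of centers at distance $0$ and $4r$. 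The crucial cancellation is that the expected sizes of $n_0$ and $n_{4r}$ are each $\approx q^{-1}m^2$, so $\sigma q^{(d-1)/2}\cdot 2q^{-1}m^2$ exactly cancels the deterministic $2\sigma q^{(d-3)/2}m^2$; this is what keeps $\Sigma$ of order $q^{d-1}m$ rather than $q^{(d-3)/2}m^2$, and it is indispensable once $m$ is large.

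To bound the three surviving pieces I would: estimate the character sum $\sum_{s_1\neq s_2}\eta(rt)=\eta(r)\sum_{c_1\neq c_2}\eta(Q(c_1-c_2))$ by $\sqrt2\,q^{(d+1)/2}m$ via \cref{th:quadraticResidueDists} (or trivially by $m^2$ when $m$ is small); estimate $n_0$ through \cref{th:incidenceBoundOddD0R} applied to the zero-radius spheres centered at the $c_i$, giving $n_0=q^{-1}m^2+O(q^{(d-1)/2}m)$; and—this is the heart of the matter—estimate $n_{4r}$ by recognizing it as an incidence count between the centers and the radius-$4r$ spheres centered at them, so that $n_{4r}=q^{-1}m^2+e$ with $|e|\le I^\Delta_{4r}(m,m)=I^\Delta_r(m,m)$, the last equality being the dilation invariance $\eta(4r)=\eta(r)$ recorded before the theorem. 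Combining these, $|\Sigma|\le Cq^{d-1}m+q^{(d-1)/2}I^\Delta_r(m,m)$ for an absolute constant $C$.

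Feeding this back into \cref{th:generalWeightedBound} and absorbing the discrepancy $|I(P,S)-q^{-1}|P|\,|S||\le |I(P,S)-p|P|\,|S||+q^{-(d+1)/2}|P|\,|S|$ (itself at most $q^{(d-1)/2}(nm)^{1/2}$, since $n,m\le q^d$) yields a self-referential bound of the shape $I^\Delta_r(n,m)^2\lesssim q^{d-1}nm+q^{(d-1)/2}n\,I^\Delta_r(m,m)$. The main obstacle is precisely this self-reference: $n_{4r}$ cannot be controlled by the elementary bounds, since \cref{th:simpleBound} is too weak here and invoking \cref{thm:iosevichRudnev} itself would be circular, so the argument must bootstrap. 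I would close it by first specializing to the diagonal $n=m$, where the estimate becomes a quadratic inequality $y\le\sqrt{A\,q^{d-1}m^2+q^{(d-1)/2}m\,y}+(\text{lower order})$ in $y=I^\Delta_r(m,m)$ whose solution is a constant multiple of $q^{(d-1)/2}m$, and then substituting the diagonal bound back to handle general $(n,m)$. Keeping the final constant at $3$ requires careful but routine bookkeeping—retaining the favorable sign of the negative $-q^{-1}m^2$ term and treating the large-$m$ range (where the trivial estimate already suffices) separately—rather than any new idea, which is also why we make no attempt to push the constant below the value $2$ of \cref{thm:iosevichRudnev}.
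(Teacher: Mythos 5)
Your proposal is correct and follows essentially the same route as the paper's own proof: the same application of \cref{th:generalWeightedBound}, the same decomposition of $\sum_{s_1 \neq s_2}(|s_1\cap s_2| - p^2q^d)$ into a character sum (bounded via \cref{th:quadraticResidueDists}), plus contributions from pairs at the tangency distances $0$ and $4r$ (bounded via \cref{th:incidenceBoundOddD0R} and the dilation identity $I^\Delta_{4r} = I^\Delta_r$), followed by the same bootstrap that first closes the diagonal case $n=m$ through a quadratic inequality and then substitutes back for general $(n,m)$. The only minor difference is that you explicitly track the discrepancy between centering at $p$ and at $q^{-1}$, a lower-order term the paper absorbs silently.
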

\begin{proof}
    We will use \cref{th:generalWeightedBound}.
    Let $P \subseteq \mathbb{F}_q^d$ with $|P|=n$, and let $S$ be a set of $Q_i^d$-spheres each having radius $r \neq 0$ with $|S|=m$.
    Let $j=(-1)^{i+1}\eta(-r)$, so that $N=jq^{(d-1)/2}$.
    Hence, $p=q^{-1}+jq^{-(d+1)/2}$, and $p^2q^d = q^{d-2} + 2j q^{(d-3)/2} + q^{-1}$.
    
    Since $D(t,r,r) = t(t-4r)$, we have $D(t,r,r)=0$ if and only if $t \in \{0,4r\}$.
    Referring to \cref{tab:N_2OddD}, the number of points in the intersection of two radius $r$ spheres at distance $0$ is $jq^{(d-1)/2}$, and the number of points in the intersection of two radius $r$ spheres at distance $4r$ is $j(q^{(d-1)/2}-q^{(d-3)/2}) = jq^{(d-1)/2} + (-1)^iq^{(d-3)/2}\eta(-4r)$.
    
    Let $S_0$ be the set of spheres having the same centers as the spheres of $S$ and radius $0$, and let $S_{4r}$ be the set of spheres having the same centers as the spheres of $S$ and radius $4r$.
    Let $C$ be the set of centers of spheres in $S$.
    
    Referring to \cref{tab:N_2OddD} and using the above observations about spheres at distance $0$ or $4r$, 
    \begin{align}
        \sum_{s_1 \neq s_2 \in S}(&|s_1 \cap s_2| - p^2q^d) \nonumber \\
        &\leq \sum_{s_1 \neq s_2 \in S}(|s_1 \cap s_2| - q^{d-2}) - 2jq^{(d-3)/2}m(m-1) \nonumber \\
        &= jq^{(d-1)/2}(I(C,S_0)+I(C,S_{4r}))- 2jq^{(d-3)/2}m(m-1) \nonumber  \\ & \hspace{15mm}+ (-1)^{i}q^{(d-3)/2}\sum_{x,y \in C}\eta(-Q(x-y)). \label{eq:sIntersectionOdd}
    \end{align}
    
    Taking absolute values,
    \begin{align*}
    \left | \sum_{s_1 \neq s_2 \in S}(\right. \left. \vphantom{\sum_{s_1 \neq s_2 \in S}}|s_1 \cap s_2| - p^2q^d) \right| \leq &\left|q^{(d-1)/2}(I(C,S_0)+I(C,S_{4r}))- 2q^{(d-3)/2}m^2 \right| \\ &+ \left|q^{(d-3)/2}\sum_{x,y \in C}\eta(-Q(x-y)) \right| + 2q^{(d-3)/2}m. 
    \end{align*}
    From the definition of $I^\Delta$ and the fact that $I^\Delta_{4r}=I^{\Delta}_r$, we have
    \[\left|(I(C,S_0) + I(C,S_{4r}) - 2q^{-1}m^2 \right| \leq I^\Delta_0(m,m) + I^\Delta_r(m,m). \]
    Hence, by \cref{th:quadraticResidueDists} and \cref{th:incidenceBoundOddD0R}, we have
    \begin{align*}
        \left | \sum_{s_1 \neq s_2 \in S}(\right. & \left. \vphantom{\sum_{s_1 \neq s_2 \in S}}|s_1 \cap s_2| - p^2q^d) \right| \\ 
        &\leq q^{(d-1)/2}\left(I^\Delta_0(m,m) + I^\Delta_r(m,m)\right) + \left |q^{(d-3)/2}\sum_{x,y \in C}\eta(-Q(x-y)) \right| + 2q^{(d-3)/2}m\\
        &< q^{(d-1)/2}\left(2q^{(d-1)/2}m +  I^\Delta_r(m,m)\right) + q^{(d-3)/2}\sqrt{2}q^{(d+1)/2}m + 2q^{(d-3)/2}m \\
        &< q^{(d-1)/2}I_r^\Delta(m,m) + 4q^{d-1}m.
    \end{align*}

    We first assume that $n=m$.
    Using \cref{th:generalWeightedBound}, we have
    \begin{align*}
    I_r^\Delta(m,m)^2 &\leq m(q^{(d-1)/2}I_r^\Delta(m,m) + 4q^{d-1}m)+m^2(q^{d-1}+N) \\
    &< m(q^{(d-1)/2}I_r^\Delta(m,m) + 6q^{d-1}m),
    \end{align*}
    so
    \[I_r^\Delta(m,m) < 3 q^{(d-1)/2}m.\]
    
    Now, we apply this bound to the general case:
    \begin{align*}
        I_r^\Delta(n,m)^2 &< n(q^{(d-1)/2}I_r^\Delta(m,m) + 6q^{d-1}m) \\
        &\leq 9q^{d-1}nm.
    \end{align*}
\end{proof}

Here comes the proof of \cref{th:ourOddBound}.
A key observation that enables this proof to work is in the following lemma.

\begin{lemma}\label{th:D=0ImpliesEqualEta}
    If $t,r_1,r_2 \neq 0$ and $D(t,r_1,r_2)=0$, then $\eta(t) = \eta(r_1) = \eta(r_2)$.
\end{lemma}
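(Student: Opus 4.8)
The plan is to regard $D(t,r_1,r_2)$ as a quadratic polynomial in the single variable $t$ and to read off the consequences of $D=0$ as an algebraic relation among $t,r_1,r_2$. Collecting coefficients and using $r_1^2+r_2^2-2r_1r_2=(r_1-r_2)^2$, we have
\[
D(t,r_1,r_2) = t^2 - 2(r_1+r_2)\,t + (r_1-r_2)^2.
\]
The proof then splits into two short steps: first deduce $\eta(r_1)=\eta(r_2)$ from the existence of a root, and then deduce $\eta(t)=\eta(r_1)$ by writing the root explicitly.

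For the first step, I would compute the discriminant of the above quadratic in $t$, namely $4(r_1+r_2)^2 - 4(r_1-r_2)^2 = 16\,r_1r_2$. Because $q$ is odd, $16$ is a nonzero square in $\F_q$, so the hypothesis that some $t\in\F_q$ satisfies $D(t,r_1,r_2)=0$ forces $16\,r_1r_2$, and hence $r_1r_2$, to be a square. Since $r_1,r_2\neq 0$, this gives $\eta(r_1r_2)=1$, i.e.\ $\eta(r_1)=\eta(r_2)$.

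For the second step, I would use that $\eta(r_1)=\eta(r_2)$ makes the ratio $r_2/r_1$ a square, so I can write $r_2=\mu^2 r_1$ for some $\mu\in\F_q^\times$. Then $r_1r_2=(\mu r_1)^2$ has square root $\pm\mu r_1$, and the quadratic formula applied to $D=0$ gives
\[
t = (r_1+r_2)\pm 2\sqrt{r_1r_2} = r_1\bigl(1+\mu^2\pm 2\mu\bigr) = r_1\,(1\pm\mu)^2.
\]
Since $t\neq 0$ and $r_1\neq 0$, the factor $(1\pm\mu)^2$ is a nonzero square, so $\eta(t)=\eta(r_1)\,\eta\bigl((1\pm\mu)^2\bigr)=\eta(r_1)$. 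Combining the two steps yields $\eta(t)=\eta(r_1)=\eta(r_2)$, as claimed.

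The computations are entirely elementary once $q$ is odd, so there is no serious obstacle; the only point requiring care is to avoid assuming that $r_1$ or $r_2$ is individually a square. This is exactly why I reduce to the ratio being a square through the substitution $r_2=\mu^2r_1$, rather than attempting to take $\sqrt{r_1}$ and $\sqrt{r_2}$ separately (which need not lie in $\F_q$). The substitution also makes the final factorization $t=r_1(1\pm\mu)^2$ transparent, and the hypothesis $t\neq 0$ immediately guarantees $1\pm\mu\neq 0$, so that $\eta(t)=\eta(r_1)$ follows without any case analysis.
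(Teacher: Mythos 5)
Your proof is correct, and it diverges from the paper's in its second half. The first step is identical: both you and the paper view $D$ as a quadratic in $t$, compute the discriminant $16r_1r_2$, and conclude that a root can exist only if $\eta(r_1)=\eta(r_2)$. For the second equality $\eta(t)=\eta(r_1)$, however, the paper uses a slicker observation: $D(t,r_1,r_2)$ is a \emph{symmetric} polynomial in its three arguments, so the very same discriminant argument, applied with $r_2$ (say) playing the role of the variable, gives that $D=0$ forces $16\,t r_1$ to be a square, i.e.\ $\eta(t)=\eta(r_1)$ — one line, no quadratic formula. You instead substitute $r_2=\mu^2 r_1$ and solve explicitly, obtaining $t=r_1(1\pm\mu)^2$; this is more computational but buys strictly more information, namely an explicit parametrization of the roots. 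In particular it recovers at a glance the fact, used in the proof of \cref{th:newIosevichRudnev}, that for $r_1=r_2=r$ the solutions of $D(t,r,r)=0$ are exactly $t\in\{0,4r\}$ (take $\mu=\pm1$). Your care in working with the square ratio $r_2/r_1$ rather than individual square roots of $r_1,r_2$ is exactly right, and the hypothesis $t\neq 0$ is used correctly to rule out the degenerate factor $(1\pm\mu)^2=0$.
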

\begin{proof}
    The discriminant of $D$ with respect to $t$ is $16r_1r_2$.
    Hence, for fixed $r_1,r_2$, we have that $D(t,r_1,r_2) = 0$ has two solutions if $\eta(r_1)=\eta(r_2)$, and has zero solutions otherwise.
    Since $D$ is a symmetric polynomial, the same reasoning applies to show that $D(t,r_1,r_2) = 0$ only if $\eta(t) = \eta(r_1)$.
\end{proof}

\ourOddBound*
\begin{proof}
    By \cref{th:D=0ImpliesEqualEta}, if $D(t,r_1,r_2) = 0$ for $r_1,r_2 \in R$, then $(-1)^{i+1}\eta(-t) = (-1)^{i+1}\eta(-r_1) = -1$.
    Consequently, the contribution to $\sum_{s_1 \neq s_2 \in S}(|s_1 \cap s_2| - q^{d-2})$ from the fourth line of \cref{tab:N_2OddD} is negative, and can be ignored.
    Letting $p = q^{-1} - q^{-(d+1)/2}$ and referring to \cref{tab:N_2OddD}, we have
    \begin{align*}
    \sum_{s_1 \neq s_2 \in S}(|s_1 \cap s_2| - p^2q^d) &\leq \sum_{s_1 \neq s_2 \in S}(|s_1 \cap s_2| - q^{d-2}) + 2q^{(d-3)/2}|S|^2 \\
    &\leq 3q^{(d-3)/2}|S|^2.
    \end{align*}
    Hence, by \cref{th:generalWeightedBound},
    \[\left|I(P,S) - q^{-1}|P|\,|S|\right| \leq q^{(d-1)/2}|P|^{1/2}|S|^{1/2} + \sqrt{3}q^{(d-3)/4}|P|^{1/2}|S|, \]
    as claimed.
\end{proof}
The reason this proof doesn't give any new result for arbitrary radii is that, in the case $(-1)^{i+1}\eta(-r)=1$, we have no way to control the contribution from the fourth line of \cref{tab:N_2OddD} to $\sum_{s_1 \neq s_2 \in S}(|s_1 \cap s_2| - q^{d-2})$ that leads to any improvement over \cref{th:iosevichRudnevMultipleRadii}.
    
\section*{Acknowledgments}
D. Koh was supported by the National Research Foundation of Korea (NRF) grant funded by the Korea government (MSIT) (NO. RS-2023-00249597).
B. Lund and S. Yoo were supported by the Institute for Basic Science (IBS-R029-C1). 

% \begin{theorem}\label{thm:cilleruelo}
%     For $d \geq 2$ and $i \in \{1,2,3,4\}$, let $S$ be a set of $Q_i^d$-spheres having arbitrary radii, and let $P \subseteq \mathbb{F}_q^d$.
%     Then,
%     \[|I(P,S) - q^{-1}|P|\,|S|| < \sqrt{q^{d}|P|\,|S|}. \]
% \end{theorem}

\bibliographystyle{plain}
\bibliography{FFSphereIncidence}

\begin{thebibliography}{10}

\bibitem{bennett2013group}
Michael Bennett, Derrick Hart, Alex Iosevich, Jonathan Pakianathan, and Misha Rudnev.
\newblock Group actions and geometric combinatorics in $\mathbb{F}_q^d$.
\newblock {\em Forum Mathematicum}, 29(1):91--110, 2013.

\bibitem{chakraborti2024almost}
Debsoumya Chakraborti and Ben Lund.
\newblock Almost spanning distance trees in subsets of finite vector spaces.
\newblock {\em Bulletin of the London Mathematical Society}, 56(5):1716--1733, 2024.

\bibitem{chapman2012pinned}
Jeremy Chapman, M~Burak Erdo{\u{g}}an, Derrick Hart, Alex Iosevich, and Doowon Koh.
\newblock Pinned distance sets, k-simplices, wolff’s exponent in finite fields and sum-product estimates.
\newblock {\em Mathematische Zeitschrift}, 271(1):63--93, 2012.

\bibitem{cilleruelo2014elementary}
Javier Cilleruelo, Alex Iosevich, Ben Lund, Oliver Roche-Newton, and Misha Rudnev.
\newblock Elementary methods for incidence problems in finite fields.
\newblock {\em Acta Arithmetica}, 177:133--142, 2017.

\bibitem{covert2019generalized}
David Covert, Doowon Koh, and Youngjin Pi.
\newblock The generalized $k$-resultant modulus set problem in finite fields.
\newblock {\em Journal of Fourier Analysis and Applications}, 25(3):1026--1052, 2019.

\bibitem{hanson2016distinct}
Brandon Hanson, Ben Lund, and Oliver Roche-Newton.
\newblock On distinct perpendicular bisectors and pinned distances in finite fields.
\newblock {\em Finite Fields and Their Applications}, 37:240--264, 2016.

\bibitem{hart2011averages}
Derrick Hart, Alex Iosevich, Doowon Koh, and Misha Rudnev.
\newblock Averages over hyperplanes, sum-product theory in vector spaces over finite fields and the {E}rd{\H{o}}s-{F}alconer distance conjecture.
\newblock {\em Transactions of the American Mathematical Society}, 363(6):3255--3275, 2011.

\bibitem{iosevich2023quotient}
Alex Iosevich, Doowon Koh, and Firdavs Rakhmonov.
\newblock The quotient set of the quadratic distance set over finite fields.
\newblock {\em Forum Mathematicum}, 36(5):1341--1358, 2024.

\bibitem{iosevich2007erdos}
Alex Iosevich and Misha Rudnev.
\newblock Erd{\"o}s distance problem in vector spaces over finite fields.
\newblock {\em Transactions of the American Mathematical Society}, 359(12):6127--6142, 2007.

\bibitem{kang2025erd}
Hunseok Kang, Doowon Koh, and Firdavs Rakhmonov.
\newblock The {E}rd{\H{o}}s-{F}alconer distance problem between arbitrary sets and $ k $-coordinatable sets in finite fields.
\newblock {\em arXiv preprint arXiv:2506.07251}, 2025.

\bibitem{koh2022finite}
Doowon Koh, Sujin Lee, and Thang Pham.
\newblock On the finite field cone restriction conjecture in four dimensions and applications in incidence geometry.
\newblock {\em International Mathematics Research Notices}, 2022(21):17079--17111, 2022.

\bibitem{koh2022point}
Doowon Koh and Thang Pham.
\newblock A point-sphere incidence bound in odd dimensions and applications.
\newblock {\em Comptes Rendus. Math{\'e}matique}, 360(G6):687--698, 2022.

\bibitem{koh2015distance}
Doowon Koh and Hae-Sang Sun.
\newblock Distance sets of two subsets of vector spaces over finite fields.
\newblock {\em Proceedings of the American Mathematical Society}, 143(4):1679--1692, 2015.

\bibitem{LN97}
Rudolf Lidl and Harald Niederreiter.
\newblock {\em Finite fields}, volume~20 of {\em Encyclopedia of Mathematics and its Applications}.
\newblock Cambridge University Press, Cambridge, second edition, 1997.
\newblock With a foreword by P. M.\ Cohn.

\bibitem{lund2020bisectors}
Ben Lund and Giorgis Petridis.
\newblock Bisectors and pinned distances.
\newblock {\em Discrete \& Computational Geometry}, 64(3):995--1012, 2020.

\bibitem{murphy2022pinned}
Brendan Murphy, Giorgis Petridis, Thang Pham, Misha Rudnev, and Sophie Stevens.
\newblock On the pinned distances problem in positive characteristic.
\newblock {\em Journal of the London Mathematical Society}, 105(1):469--499, 2022.

\bibitem{phuong2017incidences}
Nguyen~D Phuong, Pham Thang, and Le~A Vinh.
\newblock Incidences between points and generalized spheres over finite fields and related problems.
\newblock In {\em Forum Mathematicum}, volume~29, pages 449--456. De Gruyter, 2017.

\bibitem{thomason1987pseudo}
Andrew Thomason.
\newblock Pseudo-random graphs.
\newblock In {\em North-Holland Mathematics Studies}, volume 144, pages 307--331. Elsevier, 1987.

\bibitem{thomason1989dense}
Andrew Thomason.
\newblock Dense expanders and pseudo-random bipartite graphs.
\newblock {\em Discrete Math.}, 75(1-3):381--386, 1989.

\end{thebibliography}

\appendix
\section{Pairwise intersection sizes of spheres}
\label{appendix}

In this section, we compute the number of points in the intersection of any two spheres.
Since the orthogonal group associated to a quadratic form $Q$ acts transitively on the level sets of $Q$, the number of points in the intersection of two spheres depends only on the radii of the spheres, and the distance between their centers.
The size of this intersection is always $q^{d-2}$ plus some lower order terms, which we denote by
\[N_2(Q,r_1,r_2,t) = \#\{x: Q(x-y_1) = r_1, Q(x-y_2) = r_2, Q(y_1-y_2) = t\} - q^{d-2}. \]
In other words, $N_2(Q,r_1,r_2,t)$ is the difference between $q^{d-2}$ and the number of points in the intersection of two $Q$-spheres, of $Q$-radius $r_1$ and $r_2$ respectively, when the $Q$-distance between the centers of the spheres is $t$.

Let $d \geq 3$.
Fix a choice of $Q_i^d$ and $t \in \mathbb{F}_q$.
Let $y = (1,t,0,\ldots,0)$.
Each point $x$ in the intersection of the radius $r_1$ sphere centered at the origin with the radius $r_2$ sphere centered at $y$ satisfies 
\begin{align*}
Q_i^d(x) &= x_1x_2 + Q_i^{d-2}(x_3,\ldots,x_d) = r_1 \text{, and} \\
Q_i^d(x-y) &= (x_1 -1)(x_2 -t) + Q_i^{d-2}(x_3,\ldots,x_d) =  r_2.
\end{align*}
Subtracting the second equation from the first and rearranging, we find
\[x_2 = r_1 - r_2 + t - tx_1.  \]
Substituting this into the equation $Q_i^d(x) = r_1$, we find that
\[Q_i^{d-2}(x_3,\ldots,x_d) = r_1 -x_1x_2 = tx_1^2 - (r_1 - r_2 +t)x_1 + r_1. \]
For $x \in \F_q$, let $f(x) = tx^2 - (r_1 - r_2 + t)x + r_1$.
We have
 \[N_2(Q_i^d, r_1, r_2, t) = \sum_{x \in \F_q} N(Q_i^{d-2}, \eta(-f(x))). \]
If $t=0$, then $f(x) = r_1 - (r_1-r_2)x$.
If additionally $r_1=r_2=r$, then $f(x) = r$, and so
\begin{equation}\label{eq:t0r}
N_2(Q_i^d, r, r, 0) = q N(Q_i^{d-2}, \eta(-r)). \end{equation}
If $t=0$ and $r_1 \neq r_2$, then the image of $f(x)$ is $\F_q$ and so
\begin{small}
\begin{equation}\label{eq:t0r1r2}
N_2(Q_i^d, r_1, r_2, 0) = \sum_{z \in \F_q} N(Q_i^{d-2}, \eta(-z)) = N(Q_i^{d-2}, 0) + \frac{q-1}{2}(N(Q_i^{d-2}, -1) + N(Q_i^{d-2}, 1)).
\end{equation}
\end{small}
Now, suppose that $t \neq 0$.
Then, setting $z=2tx-(r_1 -r_2 +t)$, and $D = (r_1 - r_2 + t)^2 - 4r_1t$, we have
\[t^{-1} f(x) = \left(x - \frac{r_1 - r_2 + t}{2t}\right)^2 - \frac{(r_1 - r_2 +t)^2-4r_1t}{4t^2} = \frac{z^2 - D}{4t^2}. \]
Hence, $\eta(-f(x)) = \eta(z^2 - D) \eta(-t)$.
%Note that $D = (r_1 - r_2 + t)^2 - 4r_1t = t^2 - 2(r_1 + r_2)t + (r_1 - r_2)^2$, which is symmetric in $r_1$ and $r_2$.
% Also note that, if $r_1=r_2$, then $D=t(t-4r)$.

If $D = 0$, then $\eta(-f(x)) = \eta(-t)$ except when $z=0$, so
\begin{equation}\label{eq:D0}N_2(Q_i^d, r_1, r_2, t) = (q-1) N(Q_i^{d-2}, \eta(-t))+N(Q_i^{d-2}, 0). \end{equation}
If $\eta(D)=1$, then there are two $z$ such that $f(x) = \frac{z^2 - D}{4t} = 0$.
Using the fact that $\sum_{z} \eta(z^2 - D) = -1$ (see, for example \cite[Theorem 5.48]{LN97}), this implies that
\begin{equation}\label{eq:D1}N_2(Q_i^d, r_1, r_2, t) = 2 N(Q_i^{d-2}, 0) + \frac{q-1}{2}N(Q_i^{d-2}, -\eta(-t)) + \frac{q-3}{2}N(Q_i^{d-2}, \eta(-t)).
\end{equation}
If $\eta(D)=-1$, then there is no $z$ such that $f(x)=\frac{z^2-D}{4t} = 0$, and so
\begin{equation}\label{eq:Dm1}N_2(Q_i^d, r_1, r_2, t) = \frac{q+1}{2}N(Q_i^{d-2}, -\eta(-t)) + \frac{q-1}{2}N(Q_i^{d-2}, \eta(-t)).\end{equation}

\end{document}